\newtheorem{theo}{Theorem}
\newtheorem{lemma}[theo]{Lemma}
\newtheorem{prop}[theo]{Proposition}
\newtheorem{cor}[theo]{Corollary} 
\newtheorem{defi}[theo]{Definition}
\newtheorem*{theo*}{Theorem}
\theoremstyle{remark}
\newtheorem*{rem}{Remark} 
\newcommand{\matN}{\ensuremath {\mathbb{N}}}
\newcommand{\matQ}{\ensuremath {\mathbb{Q}}}
\newcommand{\matR} {\ensuremath {\mathbb{R}}}
\newcommand{\matC} {\ensuremath {\mathbb{C}}}
\newcommand{\matZ} {\ensuremath {\mathbb{Z}}}
\renewcommand{\epsilon}{\ensuremath \varepsilon}
\renewcommand{\bar}[1]{\ensuremath \overline{#1}}
\newcommand{\res}{\operatorname{res}}
\newcommand{\Quot}{\operatorname{Quot}}
\newcommand{\im}{\operatorname{im}}
\begin{document}
 \title[]{
\textbf{THE MAILLOT-R\"OSSLER CURRENT AND THE POLYLOGARITHM ON ABELIAN SCHEMES}
}
\author{GUIDO KINGS AND DANNY SCARPONI} 
\address{Fakult\"at f\"ur Mathematik \\
Universit\"at Regensburg\\
93040 Regensburg\\
Germany}
\thanks{This research was supported by the DFG grant: SFB 1085 “Higher invariants”}

\maketitle

\begin{abstract}
 We give  a conceptual proof of the fact that the realisation of 
 the degree zero part of the polylogarithm on abelian schemes in analytic Deligne cohomology 
 can be described in terms of the Bismut-Köhler higher analytic
 torsion form of the Poincar\'e bundle. Furthermore, we provide a new axiomatic characterization of  the arithmetic Chern character of the Poincaré bundle using only invariance properties under isogenies. For this we obtain a decomposition result for the arithmetic Chow group of independent interest.
\end{abstract}
\tableofcontents

\section*{Introduction}
In an important contribution Maillot and R\"ossler constructed a Green current $\mathfrak{g}_{\mathcal{A}^\vee}$ for the zero section of an abelian scheme $\mathcal{A}$ which is norm compatible (i.e. $[n]_*\mathfrak{g}_{\mathcal{A}^\vee}=\mathfrak{g}_{\mathcal{A}^\vee}$) and is the push-forward of the arithmetic Chern character of the (canonically metrized) Poincar\'e bundle. In particular, on the complement of the zero section the Green current $\mathfrak{g}_{\mathcal{A}^\vee}$ is the degree $(g-1)$ part of the analytic torsion form of the Poincar\'e bundle. Moreover, certain linear combinations of translates of these currents are even motivic in the sense that their classes in analytic Deligne cohomology are in the image of the regulator from motivic cohomology. 

In the special case of a family of elliptic curves the current 
$\mathfrak{g}_{\mathcal{A}^\vee}$ is described by a Siegel-function whose usefulness for many arithmetic problems (in particular for special values of $L$-functions and Iwasawa theory) is well-known and one could hope that the Maillot-R\"ossler current plays a similar role for abelian schemes.

On the other hand the first author has constructed the motivic polylogarithm $\textnormal{pol}^0\in H^{2g-1}_{\mathcal{M}}(\mathcal{A}\setminus\mathcal{A}[N],g)$
of the abelian scheme $\mathcal{A}$ without its $N$-torsion points $\mathcal{A}\setminus\mathcal{A}[N]$, see \cite{kingsross}. Here $g$ is the relative dimension of $\mathcal{A}$. The polylogarithm is also norm-compatible $[n]_*\textnormal{pol}^0=\textnormal{pol}^0$ for $n$ coprime to $N$ and in the elliptic case it is directly related to Siegel functions and modular units. 

It is  natural to ask how $\textnormal{pol}^0$ is related to $\mathfrak{g}_{\mathcal{A}^\vee}$. 
This question was answered completely in \cite{kingsross} and it turns out that the image of $ -2\textnormal{pol}^0$ in analytic Deligne cohomology is the Maillot-R\"ossler current $[N]^{*}\mathfrak{g}_{\mathcal{A}^\vee}-N^{2g}\mathfrak{g}_{\mathcal{A}^\vee}$. Due to the fact that in analytic Deligne cohomology there is no residue sequence, the proof of this fact in \cite{kingsross} was much more complicated than it should be and proceeded by a reduction to the case of a product of elliptic curves via the moduli space of abelian varieties and an explicit computation. 

In this paper we give a much simpler and very structural proof of the identity between the polylogarithm and the Maillot-R\"ossler current (cf. Theorem \ref{theo}).
We circumvent the difficulties of the approach in \cite{kingsross}  by working in Betti cohomology instead of analytic Deligne cohomology. As a result one has only to compare the residues of the classes.
In fact we achieve much more and give an axiomatic characterization of the Maillot-R\"ossler current which does not involve the Poincar\'e bundle (cf. Theorem \ref{uniq}). More precisely, we prove that any class $\widehat{\xi}\in \widehat{\textnormal{CH}}^g (\mathcal{A})_\matQ $ in the arithmetic Chow group which satisfies that its image in the Chow group ${\textnormal{CH}}^g (\mathcal{A})_\matQ $ is the zero section and such that 
\[ 
([n]^*-n^{2g})(\widehat{\xi})=0 \mbox{ in }\widehat{\textnormal{CH}}^g (\mathcal{A})_\matQ 
\]
holds for some   $n\geq 2$ is in fact equal to 
$(-1)^g p_{1,\ast}
  \left(\widehat{\textnormal{ch}}(\bar{\mathcal{P}})\right)^{(g)}$. This characterization of the Maillot-R\"ossler current relies on a decomposition into generalized eigenspaces for the action of $[n]^{*}$ on the arithmetic $\widehat{\textnormal{CH}}^g (\mathcal{A})_\matQ$, which might be of independent interest (cf. Corollary \ref{cor:exact-eigenspaces}).

Here is a short synopsis of our paper: in the first section we give
some background on motivic cohomology and arithmetic Chow groups. In the second section we review 
the polylogarithm and the Maillot-R\"ossler current. In the third section we carry out the comparison between the Maillot-R\"ossler current and the polylogarithm. In the fourth section we prove a decomposition of the arithmetic Chow group and in the last section we give an axiomatic characterization of the Maillot-R\"ossler current.

The authors would like to express their gratitude to Jos\'e Burgos, who was always ready for  encouraging discussions and who explained to us many subtler aspects of his work on arithmetic Chow groups and Deligne cohomology. This work was carried out at the SFB 1085  “Higher invariants” in Regensburg whose support is gratefully acknowledged.

 \section{Preliminaries on motivic cohomology, Arakelov theory and Deligne cohomology}
\subsection{Motivic cohomology}
Let $\pi :\mathcal{A}\rightarrow S$ be an abelian scheme of relative dimension $g$, let $\epsilon: S \rightarrow\mathcal{A}$ be the zero section, let $N>1$ be an integer and let 
$\mathcal{A}[N]$ be the finite group scheme of $N$-torsion points. Here $S$ is smooth over a subfield $k$ of the complex numbers. 
We will write $S_0$ for the image of $\epsilon$ 
in $\mathcal{A}$. We denote by $\mathcal{A}^\vee $ the dual abelian scheme of $\mathcal{A} $ and by $\epsilon^\vee $ its zero section. 

In \cite{soul85} and \cite{beil}, C. Soul\'e and A. Beilinson defined motivic cohomology for any variety $V$ over a field
\[
 H^{i}_{\mathcal{M}}(V,j):=\textnormal{Gr}^j_{\gamma}K_{2j-i}(V)\otimes \matQ.
\]
\begin{rem} In this paper we work with the above rather old fashioned definition of motivic cohomology for the compatibility with earlier references. This and the requirement that $S$ is smooth over a base field $k$ is not necessary. The later condition does no harm as we are mainly interested in the arithmetic Chow groups and Deligne cohomology.
For a much more general setting we refer to the paper \cite{Huber-Kings} where also
the decomposition of motivic cohomology is considered in an up to date fashion. 
\end{rem}

For any integer $a>1$ and any $W\subseteq \mathcal{A}$ open sub-scheme such that 
\[
 j:[a]^{-1}(W)\hookrightarrow W
\]
is an open immersion (here $[a]:\mathcal{A}\rightarrow \mathcal{A}$ is the $a$-multiplication on $\mathcal{A}$), the trace map with respect to $a$ is defined as
\begin{equation}\label{eq:tr-def}
\begin{tikzpicture}[description/.style={fill=white,inner sep=2pt}, ciao/.style={fill=red,inner sep=2pt}]
\matrix (m) [matrix of math nodes, row sep=3.5em,
column sep=2.5em, text height=1.5ex, text depth=0.25ex]
{ \textnormal{tr}_{[a]}:H^{\cdot}_{\mathcal{M}}(W,*) & H^{\cdot}_{\mathcal{M}}([a]^{-1}(W),*)
& H^{\cdot}_{\mathcal{M}}(W,*). \\
  };
	\path[->,font=\scriptsize] 
		(m-1-1) edge node[above] {$ j^* $} (m-1-2)
		(m-1-2) edge node[above] {$ [a]_* $} (m-1-3)
		;
\end{tikzpicture}
\end{equation}

For any integer $r$ we let
\[
 H^{\cdot}_{\mathcal{M}}(W,*)^{(r)}:=\left\{ \psi\in H^{\cdot}_{\mathcal{M}}(W,*)|( \textnormal{tr}_{[a]}-a^r\textnormal{Id})^k\psi=0 \textnormal{ for some } k\geq 1 \right\}
\]
be the generalized eigenspace of $ \textnormal{tr}_{[a]}$ of weight $r$. One can prove that there is a decomposition into $ \textnormal{tr}_{[a]}$-eigenspaces
\[
 H^{\cdot}_{\mathcal{M}}(\mathcal{A},*)\cong \bigoplus_{r=0}^{2g}H^{\cdot}_{\mathcal{M}}(\mathcal{A},*)^{(r)}
\]
which is independent of $a$ and that 
\[
 H^{\cdot}_{\mathcal{M}}(\mathcal{A}\setminus S_0,*)^{(0)}=0
\]
 (cf. Proposition 2.2.1 in \cite{kingsross}).

\subsection{Arithmetic varieties}
  An arithmetic ring is a triple  $(R,\Sigma,F_\infty)$ where 
\begin{itemize}
 \item $R$ is an excellent regular Noetherian integral domain,
 \item $\Sigma$ is a finite nonempty set of monomorphisms $\sigma:R\rightarrow \matC$,
 \item $F_\infty$ is an anti-linear involution of the $\matC-$algebra $\matC^{\Sigma}:=\matC \underbrace{\times ... \times}_{|\Sigma|}\matC$, such that the diagram
\begin{center}
\begin{tikzpicture}[description/.style={fill=white,inner sep=2pt}, ciao/.style={fill=red,inner sep=2pt}]
\matrix (m) [matrix of math nodes, row sep=3.5em,
column sep=3.5em, text height=1.5ex, text depth=0.25ex]
{ R & \matC^{\Sigma}  \\
  R & \matC^{\Sigma}  \\
  };
	\path[->,font=\scriptsize]
		(m-1-1) edge node[left] {$ \textnormal{Id} $} (m-2-1)
		
		(m-1-2) edge node[auto] {$ F_{\infty} $} (m-2-2)
		
		;
	\path[->,font=\scriptsize] 
		(m-1-1) edge node[above] {$ \delta $} (m-1-2)
		(m-2-1) edge node[above] {$ \delta $} (m-2-2)
		
		;
\end{tikzpicture}
\end{center}
commutes (here by $\delta$ we mean the natural map to the product induced by the family of maps $\Sigma$). 
\end{itemize}
An arithmetic variety $X$ over $R$, is a scheme of finite type over $R$, which is flat, quasi-projective and regular.
As usual we write
\[
 X(\matC):=\coprod_{\sigma\in \Sigma}(X\times_{R,\sigma}\matC)(\matC).
\]
Note that $F_\infty$ induces an involution $F_\infty:X(\matC)\rightarrow X(\matC)$.
\subsection{Arithmetic Chow groups} Let $p\in \matN$.
We denote by:
\begin{itemize}
 \item $E^{p,p}(X_\matR)$ the $\matR$-vector space of 
smooth real forms $\omega$ on $X(\matC)$ of type $(p,p)$
such that $F^*_\infty \zeta=(-1)^{p}\omega$,
 \item  $\tilde E^{p,p}(X_\matR)$ the quotient $E^{p,p}(X_\matR)/(\textnormal{Im}\partial 
+\textnormal{Im}\bar\partial)$,
 \item $D^{p,p}(X_\matR)$ the $\matR$-vector space of real currents 
$\zeta$ on $X(\matC)$ of type $(p,p)$ such that $F^*_\infty \zeta=(-1)^{p}\zeta$,
 \item  $\tilde D^{p,p}(X_\matR)$ the quotient $D^{p,p}(X_\matR)/(\textnormal{Im}\partial 
+\textnormal{Im}\bar\partial)$.
 \end{itemize}
 If $\omega$ (resp. $\zeta$) is a form in  $E^{p,p}(X_\matR) $ (resp. a current in $D^{p,p}(X_\matR)$), we write $\tilde \omega$ (resp. $\tilde \zeta$) for its class
 in $\tilde E^{p,p}(X_\matR) $ (resp. $\tilde D^{p,p}(X_\matR) $).
 
We briefly recall the definition of the arithmetic Chow groups of $X$, as given in \cite{gilsou}, section 3.3. Let
$Z^q(X)$ denote the group of cycles of codimension $q$ in $X$ and $\textnormal{CH}^q (X) $ denote the $q$-th Chow group of $X$.
 We write $ \widehat{Z}^q(X) $ for the subgroup of
\[
  Z^q(X)\oplus\tilde D^{q-1,q-1}(X_\matR)
\]
consisting of pairs $(z,\tilde h)$ where $z\in Z^q(X) $ and 
$h\in D^{q-1,q-1}(X_\matR)$ verify
\[ \textnormal{dd}^c h+\delta_z \in E^{q,q}(X_\matR).
\]
By definition, the class $\tilde h$ is then a Green current for $z$. Note that if $\tilde h$ is a Green current for $Z$, the form $\textnormal{dd}^c h+\delta_Z  $ is closed.
                                                                                                           


For any codimension $q-1$ integral subscheme $i: W\hookrightarrow X$ and any $f\in k(W)^*$,   
one can verify, by means of the Poincaré-Lelong lemma, that the pair
\[
 \widehat{\text{div}}(f):=(\text{div}(f), -i_* \text{log}|f|^2)
\]
is an element in $ \widehat{Z}^q(X) $.
Then the
q-th arithmetic Chow group of $X$ is the quotient
\[\widehat{\textnormal{CH}}^q (X):= \widehat{Z}^q(X)/\widehat{R}^q(X)
 \]
where $\widehat{R}^q(X)$ is the subgroup generated by all pairs $ \widehat{\text{div}}(f)$, for any $f\in k(W)^*$ and 
any $W\subset X$
as above. If $Z^{q,q}(X_\matR)\subseteq E^{q,q}(X_\matR)$ denotes the subspace of closed forms, we have a well defined map
\[
   \omega:  \widehat{\textnormal{CH}}^q (X)\rightarrow Z^{q,q}(X_\matR)                                                                                                                     
     \]        
     sending the class of $(z,\tilde h)$ to $\textnormal{dd}^c h+\delta_z $. Finally we have a map
     \[
      \zeta: \widehat{\textnormal{CH}}^q(X)\rightarrow \textnormal{CH}^q(X)
     \]
sending the class of $(z,\tilde h)$ to the class of $z$.
\subsection{Analytic Deligne cohomology of arithmetic varieties}
If $X$ is an arithmetic variety over $R$ we write
\[
\textnormal{H}^{q}_{D^\textnormal{an}}(X_{\matR}, \matR(p)):=\{\gamma \in \textnormal{H}^{q}_{D^\textnormal{an}}(X(\matC), 
\matR(p))|F^*_\infty \gamma=(-1)^{p}\gamma \},\] where 
$  \textnormal{H}^{*}_{D^\textnormal{an}}(X(\matC), 
\matR(p))$ is the analytic Deligne cohomology of the complex manifold $X(\matC) $, i.e. the hypercohomology of the complex 
 \begin{center}
\begin{tikzpicture}[description/.style={fill=white,inner sep=2pt}, ciao/.style={fill=red,inner sep=2pt}]
\matrix (m) [matrix of math nodes, row sep=3.5em,
column sep=2.5em, text height=1.5ex, text depth=0.25ex]
{ 0  & (2\pi i)^p \matR & \mathcal{O}_{X(\matC)} & \Omega^1_{X(\matC)} & ... & \Omega_{X(\matC)}^{p-1} & 0, \\
  };
	\path[->,font=\scriptsize]
		(m-1-3) edge node[above] {$d$} (m-1-4)
(m-1-1) edge node[above] {$ $} (m-1-2)
(m-1-2) edge node[above] {$ $} (m-1-3)
(m-1-4) edge node[above] {$ $} (m-1-5)
(m-1-5) edge node[above] {$ $} (m-1-6)
(m-1-6) edge node[above] {$ $} (m-1-7)
		;
\end{tikzpicture}
\end{center}
($  \Omega_{X(\matC)}^{*}$ denotes the De Rham complex of holomorphic forms on $X(\matC)$). In the following sections we will need the following characterization 
(cf. [\cite{burgosarith}, Section 2]):
 \begin{equation}\label{eq:explicit-deligne-coh} \textnormal{H}^{2p-1}_{D^\textnormal{an}}(X_\matR, \matR(p))= \{ \tilde x\in  (2\pi i)^{p-1}\tilde E^{p-1,p-1}(X_\matR)| \ \partial \bar \partial x=0   \}.
\end{equation}
\subsection{Analytic Deligne cohomology and Betti cohomology}
By definition of analytic Deligne cohomology there is a canonical map to Betti cohomology
\[
 \phi_B:  H^{2g-1}_{D^{\textnormal{an}}}( (\mathcal{A}\setminus \mathcal{A}[N])_\matR,\matR(g))\rightarrow 
  H^{2g-1}_{B}( (\mathcal{A}\setminus \mathcal{A}[N])(\matC),\matR(g)).
\]
We will need later the following explicit description of this map: First we compute the group $ H^{2g-1}_{B}( (\mathcal{A}\setminus \mathcal{A}[N])(\matC),g)$ with the cohomology of the  complex of currents $D^*((\mathcal{A}\setminus \mathcal{A}[N])(\matC), g):=
 (2\pi i)^g D^*((\mathcal{A}\setminus \mathcal{A}[N])(\matC))$
 \[
   H^{2g-1}_{B}( (\mathcal{A}\setminus \mathcal{A}[N])(\matC),g)=\frac{\{\eta\in D^{2g-1}((\mathcal{A}\setminus \mathcal{A}[N])(\matC),g)\lvert d \eta=0\} }
   {\{d\omega\lvert \omega\in D^{2g-2}((\mathcal{A}\setminus \mathcal{A}[N])(\matC), g)\}}.
 \]
\begin{lemma}
Using the description in equation \eqref{eq:explicit-deligne-coh}, the map $\phi_B$ sends the class $\tilde x$ of  $x\in (2\pi i)^{g-1}E^{g-1,g-1}((\mathcal{A}\setminus \mathcal{A}[N])_\matR)$ with $\partial \bar \partial x=0$ to 
\begin{equation} 
\phi_B(\tilde{x})=[4\pi i \textnormal{d}^cx]
\end{equation}
\end{lemma}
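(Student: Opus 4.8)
The plan is to compute $\phi_B$ in the model of analytic Deligne cohomology for which both the description \eqref{eq:explicit-deligne-coh} of the source and the map to Betti cohomology are transparent, namely Burgos' description via the cone on the Hodge filtration (cf. \cite{burgosarith}). Write $X=\mathcal{A}\setminus\mathcal{A}[N]$, let $E^*$ be the complex-valued $C^\infty$-forms on $X(\matC)$ with Hodge filtration $F^gE^*$, and let $E^*_\matR(g)$ denote the $(2\pi i)^g$-twisted real forms, whose cohomology is $H^{*}_B(X(\matC),\matR(g))$. In this model $H^{2g-1}_{D^{\textnormal{an}}}(X_\matR,\matR(g))$ is computed by the simple complex of the difference of the two natural inclusions $E^*_\matR(g)\oplus F^gE^*\to E^*$: a degree $2g-1$ cocycle is a triple $(a,b,c)$ with $a\in E^{2g-1}_\matR(g)$ and $b\in F^gE^{2g-1}$ closed, $c\in E^{2g-2}$, and $a-b=\textnormal{d}c$; the map $\phi_B$ sends the class of such a triple to $[a]$.

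First I would exhibit an explicit cocycle representing $\tilde x$. I claim that
\[
(a,b,c)=\bigl(4\pi i\,\textnormal{d}^cx,\ 2\partial x,\ -x\bigr)
\]
works, where $\textnormal{d}^c=\tfrac{1}{4\pi i}(\partial-\bar\partial)$, so that $4\pi i\,\textnormal{d}^cx=(\partial-\bar\partial)x$. Indeed $\textnormal{d}(4\pi i\,\textnormal{d}^cx)=-2\partial\bar\partial x=0$ by hypothesis, and a direct conjugation check using $\bar x=(-1)^{g-1}x$ gives $\overline{4\pi i\,\textnormal{d}^cx}=(-1)^g\,4\pi i\,\textnormal{d}^cx$, so $a\in E^{2g-1}_\matR(g)$; moreover $b=2\partial x\in E^{g,g-1}\subset F^gE^{2g-1}$ is closed since $\textnormal{d}(2\partial x)=-2\partial\bar\partial x=0$, and one checks $a-b-\textnormal{d}c=(\partial-\bar\partial)x-2\partial x+(\partial+\bar\partial)x=0$. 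The one input I would import from \cite{burgosarith} is that this triple is precisely the image of $x$ under the explicit quasi-isomorphism realizing \eqref{eq:explicit-deligne-coh}; granting this, $\phi_B(\tilde x)=[a]=[4\pi i\,\textnormal{d}^cx]$.

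It then remains to check that $4\pi i\,\textnormal{d}^cx$ represents a class in the target group as normalized in the statement, i.e. lies in $(2\pi i)^gD^{2g-1}$. Writing $x=(2\pi i)^{g-1}x_0$ with $x_0$ real one has $4\pi i\,\textnormal{d}^cx=(2\pi i)^g\cdot(2\pi i)^{-1}(\partial-\bar\partial)x_0$, and $(2\pi i)^{-1}(\partial-\bar\partial)x_0$ is a genuine real current; thus the constant $4\pi i=2\cdot 2\pi i$ supplies exactly the extra twist. This also explains conceptually why the universal constant is $4\pi i$ and not $2\pi i$: one factor $2\pi i$ accounts for the twist shift from $\matR(g-1)$ to $\matR(g)$, while the factor $2$ is the one appearing both in $\textnormal{dd}^c=\tfrac{i}{2\pi}\partial\bar\partial$ and in the component $b=2\partial x$.

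The main obstacle is the bookkeeping rather than any conceptual difficulty: one must recall Burgos' quasi-isomorphism accurately and track every $(2\pi i)$-twist together with the normalization of $\textnormal{d}^c$, so that the constant comes out as exactly $4\pi i$ and the type and reality conditions defining $E^{2g-1}_\matR(g)$ are met. Once the model and the comparison are fixed, every remaining step is a formal verification resting solely on the hypothesis $\partial\bar\partial x=0$.
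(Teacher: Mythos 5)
Your proposal is correct and ultimately rests on the same input as the paper: the paper's entire proof is a bare citation of Burgos' Theorem 2.6, which is precisely the comparison quasi-isomorphism you yourself must import in order to know that $\bigl(4\pi i\,\mathrm{d}^c x,\ 2\partial x,\ -x\bigr)$ is the cocycle representing $\tilde x$ in the cone model. Your verifications (the cocycle identities, the reality condition $\overline{(\partial-\bar\partial)x}=(-1)^g(\partial-\bar\partial)x$, and the $(2\pi i)$-twist bookkeeping) are accurate and usefully unpack that citation, but they do not replace it --- exhibiting a cocycle whose third entry is $-x$ does not by itself identify its class with the image of $\tilde x$ under \eqref{eq:explicit-deligne-coh} --- so the appeal to Burgos remains the essential step, exactly as in the paper.
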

\begin{proof}
This is \cite[Theorem 2.6]{burgosarith}.
\end{proof}
We also need an explicit description of the connecting homomorphism, which we call the residue homomorphism
\[
 \res_B: H^{2g-1}_{B}( (\mathcal{A}\setminus \mathcal{A}[N])(\matC),\matR(g))\rightarrow H^{2g}_{B, \mathcal{A}[N]\setminus S_0}( (\mathcal{A}\setminus S_0)(\matC),\matR(g)).
\]
For this we compute $ H^{2g}_{B, \mathcal{A}[N]\setminus S_0}( (\mathcal{A}\setminus S_0)(\matC),\matR(g))$ with the cohomology of the simple complex of the restriction morphism
$ D^*((\mathcal{A}\setminus S_0)(\matC), g)\rightarrow D^*((\mathcal{A}\setminus \mathcal{A}[N])(\matC), g) $ and get
\small{
\begin{multline*} H^{2g}_{B, \mathcal{A}[N]\setminus S_0}( (\mathcal{A}\setminus S_0)(\matC),\matR(g))=\\
 \frac{\{(\xi,\tau)\in D^{2g}((\mathcal{A}\setminus S_0)(\matC),g) )
  \oplus  D^{2g-1}((\mathcal{A}\setminus \mathcal{A}[N])(\matC),g)\lvert d\xi=0 \text{ and } \xi\lvert_{\mathcal{A}\setminus \mathcal{A}[N]}=d\tau\}}
  {\{(d\theta,\theta\lvert_{\mathcal{A}\setminus \mathcal{A}[N]}-d\alpha)\lvert \theta\in D^{2g-1}((\mathcal{A}\setminus S_0)(\matC),g) , \alpha \in
  D^{2g-2}((\mathcal{A}\setminus \mathcal{A}[N])(\matC),g)\}}.
\end{multline*}
}\normalsize{ Note that we are using the simple complex  as in [\cite{burgosarith}, Section 1] (and not the cone in the sense of Verdier) of the restriction morphism to compute cohomology with support.} From the definitions one gets immediately:
\begin{lemma}\label{lemma:res-B}
The residue $\res_B$ sends the class of $\eta$, which we denote by $[\eta]$, to 
\[ 
\res_B([\eta])=[0,-\eta],
\]
where $[0,-\eta]$ denotes the class of $(0,-\eta)$.
\end{lemma}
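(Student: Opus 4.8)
The plan is to read $\res_B$ directly off the presentation of the target group
$H^{2g}_{B,\mathcal{A}[N]\setminus S_0}((\mathcal{A}\setminus S_0)(\matC),\matR(g))$ recorded above as the cohomology of the simple complex $s(\rho)$ of the restriction morphism
$\rho\colon D^{\bullet}((\mathcal{A}\setminus S_0)(\matC),g)\to D^{\bullet}((\mathcal{A}\setminus\mathcal{A}[N])(\matC),g)$. Abbreviating these two current complexes by $D^{\bullet}(U)$ and $D^{\bullet}(V)$ (so $U=\mathcal{A}\setminus S_0$ and $V=\mathcal{A}\setminus\mathcal{A}[N]$), the structural fact I would use is that $s(\rho)$ sits in the tautological short exact sequence of complexes
\[
0 \longrightarrow D^{\bullet-1}(V) \xrightarrow{\ \iota\ } s(\rho)^{\bullet} \xrightarrow{\ p\ } D^{\bullet}(U) \longrightarrow 0,
\]
with $\iota(\tau)=(0,\tau)$ and $p(\xi,\tau)=\xi$, whose associated long exact sequence is precisely the sequence for cohomology with support in $\mathcal{A}[N]\setminus S_0$,
\[
\cdots \to H^{2g-1}_{B}(U) \to H^{2g-1}_{B}(V) \xrightarrow{\ \res_B\ } H^{2g}_{B,\mathcal{A}[N]\setminus S_0}(U) \to H^{2g}_{B}(U) \to \cdots .
\]
The key observation is that in this presentation $\res_B$ is \emph{not} a snake-lemma connecting map but the map $\iota_{*}$ induced by the inclusion of the $\tau$-component; hence it can be evaluated on representatives without any lifting, which is what makes the formula immediate.

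With this in hand the computation is a one-line check. Let $\eta\in D^{2g-1}(V)$ with $d\eta=0$ represent a class in $H^{2g-1}_{B}(V)$; its image is the class of $(0,-\eta)\in s(\rho)^{2g}$. First I verify that $(0,-\eta)$ satisfies the two conditions defining cocycles in the displayed presentation of $H^{2g}_{B,\mathcal{A}[N]\setminus S_0}(U)$: taking $\xi=0$ and $\tau=-\eta$, the condition $d\xi=0$ is clear, while $\xi|_{\mathcal{A}\setminus\mathcal{A}[N]}=d\tau$ reads $0=d(-\eta)=-d\eta$, which holds since $\eta$ is closed. Next I check independence of the representative: if $\eta$ is replaced by $\eta+d\omega$ with $\omega\in D^{2g-2}(V)$, then $(0,-\eta)$ changes by $(0,-d\omega)$, and with the differential $d(\theta,\alpha)=(d\theta,\ \theta|_{\mathcal{A}\setminus\mathcal{A}[N]}-d\alpha)$ recorded above one has $(0,-d\omega)=d(0,\omega)$, a coboundary. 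Therefore $\res_B([\eta])=[0,-\eta]$.

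The only real obstacle is sign bookkeeping, which is why I have written $(0,-\eta)$ rather than $(0,\eta)$. With the differential $d(\theta,\alpha)=(d\theta,\theta|_{\mathcal{A}\setminus\mathcal{A}[N]}-d\alpha)$ the subcomplex $\{(0,\tau)\}$ carries the differential $\tau\mapsto -d\tau$, so it is the shifted complex $D^{\bullet}(V)[-1]$, and the minus sign is forced by the chosen identification $H^{2g}(D^{\bullet}(V)[-1])\cong H^{2g-1}_{B}(V)$ together with the sign convention for the simple complex in \cite[Section 1]{burgosarith}. I would make this fully explicit by running one diagram chase through the short exact sequence above, and then cross-check the outcome against the sign of the analytic Deligne residue so that the two residues used in the paper are compatible. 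Beyond pinning down this convention there is no geometric content, consistent with the assertion that the formula follows at once from the definitions.
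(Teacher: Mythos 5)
Your proposal is correct and matches the paper's approach: the paper offers no written proof beyond ``From the definitions one gets immediately,'' and your argument simply unpacks those definitions — the short exact sequence $0\to D^{\bullet-1}(V)\to s(\rho)^{\bullet}\to D^{\bullet}(U)\to 0$ underlying Burgos' simple-complex presentation, the cocycle and well-definedness checks, and the sign forced by the shift/connecting-map convention. For the sign cross-check you defer, note that the snake-lemma chase does confirm it: lifting a closed $\eta$ on $V$ to a current $\tilde\eta$ on $U$ (restriction of currents is surjective), one has $d(\tilde\eta,0)=(d\tilde\eta,\eta)$, so $[(d\tilde\eta,0)]=[(0,-\eta)]$ in $H^{2g}(s(\rho))$, in agreement with the formula and with its later use in the proof of Theorem \ref{theo}.
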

\section{Review of the polylog and the Maillot-R\"ossler current}
\subsection{The axiomatic definition of $\rm{pol}^0 $}
In \cite{kingsross}, G. Kings and D. R\"ossler have provided a simple axiomatic description of the degree zero part of the polylogarithm on abelian schemes. We briefly recall it here.

 The zero step of the motivic polylogarithm 
 is by definition a class in motivic cohomology
\[
 \textnormal{pol}^0\in H^{2g-1}_{\mathcal{M}}(\mathcal{A}\setminus\mathcal{A}[N],g)^{(0)}.
\]
To describe it more precisely, consider the residue map along $\mathcal{A}[N]$
\[
 H^{2g-1}_{\mathcal{M}}(\mathcal{A}\setminus\mathcal{A}[N],g)\rightarrow H^0_{\mathcal{M}}(\mathcal{A}[N]\setminus S_0,0).
\]
This map induces an isomorphism
\[
 \textnormal{res}:H^{2g-1}_{\mathcal{M}}(\mathcal{A}\setminus\mathcal{A}[N],g)^{(0)}\cong H^0_{\mathcal{M}}(\mathcal{A}[N]\setminus S_0,0)^{(0)}
\]
(see Corollary 2.2.2 in \cite{kingsross}).
\begin{defi}
The degree zero part of the polylogarithm $\rm{pol}^0$ is the unique element of $H^{2g-1}_{\mathcal{M}}(\mathcal{A}\setminus\mathcal{A}[N],g)^{(0)}$ mapping under $\textnormal{res}$ to the fundamental class $1_N^\circ$ of $\mathcal{A}[N]\setminus S_0$.

\end{defi} 
We recall now that we have a map $\textnormal{reg}_{\textnormal{an}}$ defined as the composition
\begin{equation*} 
\begin{tikzpicture}[scale=0.5][description/.style={fill=white,inner sep=0.5pt}, ciao/.style={fill=red,inner sep=2pt}]
\matrix (m) [matrix of math nodes, row sep=3em,
column sep=1.5em, text height=1.5ex, text depth=0.25ex]
{ H^{2g-1}_{\mathcal{M}}(\mathcal{A}\setminus\mathcal{A}[N],g) &  \textnormal{H}^{2g-1}_{D}((\mathcal{A}\setminus\mathcal{A}[N])_{\matR},\matR(g)) &
\textnormal{H}^{2g-1}_{D^\textnormal{an}}((\mathcal{A}\setminus\mathcal{A}[N])_{\matR},\matR(g)) \\
  };
	\path[->,font=\scriptsize] 
		(m-1-1) edge node[above] {$ \textnormal{reg} $} (m-1-2)
		(m-1-2) edge node[above] {$ \textnormal{forget} $} (m-1-3)
		;
\end{tikzpicture} 
\end{equation*} 
where $ \textnormal{reg}$ is the regulator map into Deligne-Beilinson
cohomology and the second map is the forgetful map from Deligne-Beilinson cohomology
to analytic  Deligne cohomology. 
\subsection{The Maillot-R\"ossler current $\mathfrak{g}_{\mathcal{A}^\vee}$}
 In \cite{onacan}, V. Maillot and D. R\"ossler proved the following theorem.
 \begin{theo}[\cite{onacan}, Theorem 1.1]\label{ga}
  There exists a unique a class of currents $\mathfrak{g}_{\mathcal{A}^\vee} \in \tilde D^{g-1,g-1}(\mathcal{A}_\matR)$ 
  which satisfies the following three properties:
  \begin{enumerate}
   \item  $\mathfrak{g}_{\mathcal{A}^\vee}$  is a Green current for $S_0 $,
   \item    $(S_0,\mathfrak{g}_{\mathcal{A}^\vee})=(-1)^g p_{1,\ast}
  \left(\widehat{\textnormal{ch}}(\bar{\mathcal{P}})\right)^{(g)}$ 
 in the group $\widehat{\textnormal{CH}}^g (\mathcal{A})_\matQ$,
   \item $[n]_*\mathfrak{g}_{\mathcal{A}^\vee}=\mathfrak{g}_{\mathcal{A}^\vee}$ for all  $n>0$.
  \end{enumerate}
 \end{theo}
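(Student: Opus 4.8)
The plan is to prove existence by writing down the canonical candidate coming from the Poincar\'e bundle, and to prove uniqueness by means of the fundamental exact sequence for arithmetic Chow groups together with the eigenspace decomposition under $[n]^{*}$ recalled in the preliminaries.

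\textbf{Existence.} I would take as candidate $\widehat{\xi} := (-1)^g p_{1,\ast}(\widehat{\textnormal{ch}}(\bar{\mathcal{P}}))^{(g)} \in \widehat{\textnormal{CH}}^g(\mathcal{A})_\matQ$, where $p_1\colon \mathcal{A}\times_S\mathcal{A}^\vee\to\mathcal{A}$ is the first projection and $\bar{\mathcal{P}}$ is the canonically metrized Poincar\'e bundle. Writing $\widehat{\xi}=(z,\tilde h)$, property (2) holds by definition, and property (1) is automatic, since by construction of $\widehat{\textnormal{CH}}^g$ the current part of any class is a Green current for its cycle part. The substantive point is to check that $\zeta(\widehat{\xi})=[S_0]$ in $\textnormal{CH}^g(\mathcal{A})_\matQ$, so that $\widehat{\xi}$ has the form $(S_0,\mathfrak{g})$. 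Applying $\zeta$ reduces this to the geometric identity $p_{1,\ast}(\textnormal{ch}(\mathcal{P}))^{(g)}=(-1)^g[S_0]$, which I would deduce from Grothendieck--Riemann--Roch for $p_1$: since $Rp_{1,\ast}\mathcal{P}\cong \epsilon_\ast\mathcal{O}_S[-g]$ (the Fourier--Mukai transform of the Poincar\'e bundle is concentrated on the zero section) and the relative tangent bundle of $\mathcal{A}^\vee/S$ is pulled back from $S$, the Todd factor is inert and matching degree-$g$ parts yields the claim.

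\textbf{Norm-compatibility, and the main obstacle.} For property (3) I would exploit the biextension identity $([n]\times\textnormal{id})^{*}\bar{\mathcal{P}}\cong(\textnormal{id}\times[n])^{*}\bar{\mathcal{P}}$ of metrized line bundles, which gives $(\textnormal{id}\times[n])^{*}\widehat{\textnormal{ch}}(\bar{\mathcal{P}})=([n]\times\textnormal{id})^{*}\widehat{\textnormal{ch}}(\bar{\mathcal{P}})$. Combining this with the projection formula, the functoriality $[n]_\ast p_{1,\ast}=p_{1,\ast}([n]\times\textnormal{id})_\ast$, and the decomposition into $[n]^{*}$-eigenspaces to isolate the degree-$g$ pushforward, one obtains $[n]_\ast\mathfrak{g}=\mathfrak{g}$. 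I expect this to be the hardest step, because the identity must be verified at the level of arithmetic classes rather than merely in $\textnormal{CH}^g$: the arithmetic pushforward $p_{1,\ast}$ carries secondary (Bismut--K\"ohler analytic torsion) contributions, and establishing their compatibility with isogenies requires the canonical metric on $\bar{\mathcal{P}}$ to behave well under $[n]$ and careful bookkeeping of the secondary classes.

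\textbf{Uniqueness.} Suppose $\mathfrak{g},\mathfrak{g}'$ both satisfy (1)--(3). By property (2), $(0,\mathfrak{g}-\mathfrak{g}')=0$ in $\widehat{\textnormal{CH}}^g(\mathcal{A})_\matQ$, and $\mathfrak{g}-\mathfrak{g}'$ is smooth, being a difference of Green currents for the same cycle $S_0$. By the Gillet--Soul\'e exact sequence $H^{2g-1}_{\mathcal{M}}(\mathcal{A},g)_\matQ \xrightarrow{\rho} \tilde D^{g-1,g-1}(\mathcal{A}_\matR)_\matQ \xrightarrow{a} \widehat{\textnormal{CH}}^g(\mathcal{A})_\matQ$, the class $\mathfrak{g}-\mathfrak{g}'$ lies in $\ker(a)=\textnormal{im}(\rho)$, so $\mathfrak{g}-\mathfrak{g}'=\rho(\psi)$ for the regulator $\rho$ induced by the Beilinson regulator. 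Since $\rho$ intertwines $[n]_\ast$, property (3) makes $\rho(\psi)$ invariant; decomposing $\psi=\sum_r\psi^{(r)}$, on which $[n]_\ast$ acts by $n^{r}$, invariance for some $n\geq 2$ kills every component with $r\neq 0$, so $\mathfrak{g}-\mathfrak{g}'=\rho(\psi^{(0)})$ with $\psi^{(0)}\in H^{2g-1}_{\mathcal{M}}(\mathcal{A},g)^{(0)}$. Finally I would show $H^{2g-1}_{\mathcal{M}}(\mathcal{A},g)^{(0)}=0$: by purity the local cohomology along the codimension-$g$ cycle $S_0$ vanishes in this degree, so the weight-preserving restriction $H^{2g-1}_{\mathcal{M}}(\mathcal{A},g)\to H^{2g-1}_{\mathcal{M}}(\mathcal{A}\setminus S_0,g)$ is injective, and its weight-$0$ part injects into $H^{2g-1}_{\mathcal{M}}(\mathcal{A}\setminus S_0,g)^{(0)}=0$ recalled above. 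Hence $\mathfrak{g}=\mathfrak{g}'$.
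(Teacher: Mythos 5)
You should first note that this paper does not actually prove Theorem \ref{ga}: it quotes it from \cite{onacan} and only later develops related machinery (Theorem \ref{eigen}, Corollary \ref{cor:exact-eigenspaces}, Proposition \ref{char}). So your proposal stands on its own. Your uniqueness half is essentially correct and is the same mechanism the paper uses in Proposition \ref{char}: the difference of two Green currents for $S_0$ representing the same arithmetic class is smooth, hence lies in $\im \rho$ for $\rho\colon H^{2g-1}_{\mathcal{M}}(\mathcal{A},g)\to \tilde E^{g-1,g-1}(\mathcal{A}_\matR)$ (one small imprecision: the Gillet--Soul\'e sequence has the smooth forms $\tilde E$, not the currents $\tilde D$, in the middle --- your smoothness remark is exactly what makes this legitimate), and then the eigenspace decomposition together with your purity argument for $H^{2g-1}_{\mathcal{M}}(\mathcal{A},g)^{(0)}=0$ kills it.

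The genuine gap is in existence, in your treatment of property (3). Property (3) is an identity of classes of currents in $\tilde D^{g-1,g-1}(\mathcal{A}_\matR)$, not of arithmetic Chow classes. Your biextension argument does prove an identity in $\widehat{\textnormal{CH}}^g(\mathcal{A})_\matQ$, namely $[n]^{*}\widehat{\xi}=n^{2g}\widehat{\xi}$ and hence, via the projection formula, $[n]_{*}\widehat{\xi}=\widehat{\xi}$. But unwinding this (using $[n]_{*}S_0=S_0$) it only says $(0,[n]_{*}\mathfrak{g}-\mathfrak{g})=0$ in $\widehat{\textnormal{CH}}^g(\mathcal{A})_\matQ$, i.e. $[n]_{*}\mathfrak{g}-\mathfrak{g}\in\im\rho$; it does \emph{not} say $[n]_{*}\mathfrak{g}=\mathfrak{g}$. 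The representative $(S_0,\mathfrak{g})$ of the class $\widehat{\xi}$ determines $\mathfrak{g}$ only up to $\im\rho$, and producing the one representative that is strictly invariant under every $[n]_{*}$ is precisely the content of the existence statement --- your own uniqueness proof shows this choice is rigid, so it cannot come for free from a class-level identity. Note also that the difficulty is not where you place it: the arithmetic pushforward $p_{1,*}$ of arithmetic cycles is defined purely by pushforward of cycles and currents, with no secondary Bismut--K\"ohler term; the analytic torsion form enters only in the arithmetic Grothendieck--Riemann--Roch comparison of $p_{1,*}\widehat{\textnormal{ch}}(\bar{\mathcal{P}})$ with $\widehat{\textnormal{ch}}(Rp_{1,*}\bar{\mathcal{P}})$, which your argument never needs (it is how Maillot--R\"ossler make $\mathfrak{g}_{\mathcal{A}^\vee}$ explicit and strictly norm-compatible). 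You can, however, close the gap with tools you already use: since $H^{2g-1}_{\mathcal{M}}(\mathcal{A},g)^{(0)}=0$, the operator $[n]_{*}-1$ is bijective on $\im\rho=\sum_{r\ge 1}\rho\bigl(H^{2g-1}_{\mathcal{M}}(\mathcal{A},g)^{(r)}\bigr)$ (nilpotent plus invertible scalar on the $\matC[X]$-submodule generated by any element), so solve $([n]_{*}-1)v=-\bigl([n]_{*}\mathfrak{g}-\mathfrak{g}\bigr)$ with $v\in\im\rho$ and replace $\mathfrak{g}$ by $\mathfrak{g}+v$; invariance under every other $[m]_{*}$ is then automatic, because $[m]_{*}\mathfrak{g}-\mathfrak{g}\in\im\rho$ is fixed by $[n]_{*}$ (the operators commute) and $[n]_{*}-1$ is injective on $\im\rho$.
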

  Here $\bar{\mathcal{P}} $ is the Poincaré bundle on $\mathcal{A}\times_S \mathcal{A}^\vee$ equipped 
 with a canonical hermitian metric,
 $p_1:\mathcal{A}\times_S \mathcal{A}^\vee \rightarrow \mathcal{A}$ 
is the first projection and  $\widehat{\textnormal{CH}}^g (\mathcal{A})$ denotes the $g^\text{th}$ aritmetic Chow group of $ \mathcal{A}$. 
The term $\widehat{\textnormal{ch}}(\bar{\mathcal{P}}) $ is the 
 arithmetic Chern character
of $\bar{\mathcal{P}} $. 

We now consider  the following arithmetic cycle
\[
 (_NS_0,_N\mathfrak{g}_{\mathcal{A}^\vee}):=([N]^* -N^{2g})(S_0, \mathfrak{g}_{\mathcal{A}^\vee}).
\]
Thanks to the geometry of the Poincaré bundle one can show that the class of $ (_NS_0,_N\mathfrak{g}_{\mathcal{A}^\vee})$ 
in $\widehat{\textnormal{CH}}^g (\mathcal{A})_\matQ$ is zero  (cf. Proposition 5.2 in \cite{farak}).
In particular, $\textnormal{dd}^c(_N\mathfrak{g}_{\mathcal{A}^\vee}\rvert_{\mathcal{A}\setminus \mathcal{A}[N]})=0 $ and 
by Theorem 1.2.2(i) in \cite{gilsou} there exists a smooth form in the class of currents 
$_N\mathfrak{g}_{\mathcal{A}^\vee}\rvert_{\mathcal{A}\setminus \mathcal{A}[N]} $. Equivalently, 
$_N\mathfrak{g}_{\mathcal{A}^\vee}\rvert_{\mathcal{A}\setminus \mathcal{A}[N]} $ lies in the image of the inclusion
\[
 \tilde E^{g-1,g-1}((\mathcal{A}\setminus \mathcal{A}[N])_\matR)\hookrightarrow  \tilde D^{g-1,g-1}((\mathcal{A}\setminus \mathcal{A}[N])_\matR).
\]
Since by \eqref{eq:explicit-deligne-coh} the group $H^{2g-1}_{D^{\text{an}}}( (\mathcal{A}\setminus \mathcal{A}[N])_{\matR},\matR(g))$ 
can be represented by classes in $(2\pi i)^{g-1}\tilde E^{g-1,g-1}((\mathcal{A}\setminus \mathcal{A}[N])_\matR)$  with $\textnormal{dd}^c $ equal to zero, we get that 
\begin{lemma} The Maillot-R\"ossler current defines a class
\[ 
(2\pi i)^{g-1} (_N\mathfrak{g}_{\mathcal{A}^\vee})\rvert_{\mathcal{A}\setminus \mathcal{A}[N]}\in H^{2g-1}_{D^{\text{an}}}( (\mathcal{A}\setminus \mathcal{A}[N])_{\matR},\matR(g)).
\]
\end{lemma}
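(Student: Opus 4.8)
The plan is to assemble the ingredients already laid out in the discussion preceding the statement, so the proof is essentially a matter of tracking a single current through the relevant identifications. First I would start from the fact, recalled above from Proposition 5.2 in \cite{farak}, that the class of $({}_NS_0,{}_N\mathfrak{g}_{\mathcal{A}^\vee})$ vanishes in $\widehat{\textnormal{CH}}^g(\mathcal{A})_\matQ$. Applying the map $\omega\colon \widehat{\textnormal{CH}}^g(\mathcal{A})\to Z^{g,g}(\mathcal{A}_\matR)$, which sends the class of $(z,\tilde h)$ to $\textnormal{dd}^c h+\delta_z$, this vanishing yields the current identity $\textnormal{dd}^c({}_N\mathfrak{g}_{\mathcal{A}^\vee})+\delta_{{}_NS_0}=0$ on $\mathcal{A}(\matC)$. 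I would then restrict this identity to the open subscheme $\mathcal{A}\setminus\mathcal{A}[N]$.

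The crucial point is that the cycle ${}_NS_0=[N]^*S_0-N^{2g}S_0$ is supported on the $N$-torsion: the preimage $[N]^{-1}(S_0)$ is exactly $\mathcal{A}[N]$, while $S_0$ itself is contained in $\mathcal{A}[N]$. Hence the current of integration $\delta_{{}_NS_0}$ restricts to zero on $\mathcal{A}\setminus\mathcal{A}[N]$, and the identity above collapses to $\textnormal{dd}^c\big({}_N\mathfrak{g}_{\mathcal{A}^\vee}\rvert_{\mathcal{A}\setminus\mathcal{A}[N]}\big)=0$. A current of type $(g-1,g-1)$ annihilated by $\textnormal{dd}^c$ is, by the regularity statement of Theorem 1.2.2(i) in \cite{gilsou}, represented by a smooth form; equivalently, the class lies in the image of the inclusion $\tilde E^{g-1,g-1}((\mathcal{A}\setminus\mathcal{A}[N])_\matR)\hookrightarrow\tilde D^{g-1,g-1}((\mathcal{A}\setminus\mathcal{A}[N])_\matR)$. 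The required $F_\infty$-invariance is inherited from the ambient group $\tilde D^{g-1,g-1}(\mathcal{A}_\matR)$, since both $[N]^*$ and multiplication by $N^{2g}$ commute with $F_\infty^*$.

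Finally, since $\textnormal{dd}^c$ and $\partial\bar\partial$ differ only by a nonzero scalar, the condition $\textnormal{dd}^c x=0$ is precisely the condition $\partial\bar\partial x=0$ appearing in the explicit description \eqref{eq:explicit-deligne-coh}. Twisting the smooth representative by $(2\pi i)^{g-1}$ therefore produces an element of $(2\pi i)^{g-1}\tilde E^{g-1,g-1}((\mathcal{A}\setminus\mathcal{A}[N])_\matR)$ with $\partial\bar\partial$ equal to zero, and \eqref{eq:explicit-deligne-coh} identifies this space with $H^{2g-1}_{D^{\text{an}}}((\mathcal{A}\setminus\mathcal{A}[N])_\matR,\matR(g))$, which gives the asserted class. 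I expect the only step requiring genuine care, and hence the main obstacle, to be the support argument that kills $\delta_{{}_NS_0}$ upon restriction; the remaining steps are direct applications of the regularity theorem of \cite{gilsou} and the description of analytic Deligne cohomology.
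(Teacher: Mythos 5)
Your proposal is correct and follows essentially the same route as the paper: the vanishing of $({}_NS_0,{}_N\mathfrak{g}_{\mathcal{A}^\vee})$ in $\widehat{\textnormal{CH}}^g(\mathcal{A})_\matQ$ forces $\textnormal{dd}^c\bigl({}_N\mathfrak{g}_{\mathcal{A}^\vee}\rvert_{\mathcal{A}\setminus\mathcal{A}[N]}\bigr)=0$, then Theorem 1.2.2(i) of \cite{gilsou} gives a smooth representative, and the description \eqref{eq:explicit-deligne-coh} yields the class in analytic Deligne cohomology. You merely make explicit two points the paper leaves implicit (applying the map $\omega$ and the support argument killing $\delta_{{}_NS_0}$ on the complement of $\mathcal{A}[N]$), which is a sound way to justify the paper's ``in particular''.
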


The exact sequence (cf. Theorem 3.3.5 and Remark in \cite{gilsou}) 
\begin{center}
\begin{tikzpicture}[description/.style={fill=white,inner sep=2pt}, ciao/.style={fill=red,inner sep=2pt}]
\matrix (m) [matrix of math nodes, row sep=3.5em,
column sep=2em, text height=1.5ex, text depth=0.25ex]
{  
  H^{2g-1}_{\mathcal{M}}(\mathcal{A}\setminus \mathcal{A}[N] ,g) & H^{2g-1}_{D^{\text{an}}}( (\mathcal{A}\setminus \mathcal{A}[N])_{\matR},\matR(g)) & 
 \widehat{\textnormal{CH}}^g ( \mathcal{A}\setminus  \mathcal{A}[N])_\matQ   
 \\   
  };
	\path[->,font=\scriptsize]
		(m-1-1) edge node[auto
		] {$ $} (m-1-2)
		(m-1-1) edge node[auto] {$\textnormal{reg}_{\textnormal{an}} $} (m-1-2)
		(m-1-2) edge node[auto] {$r  $} (m-1-3)
		;
\end{tikzpicture}
\end{center}
where $r$ sends $\tilde x$ to the class of $\left(0,\frac{\tilde x}{(2\pi i)^{g-1}} \right)$, together with the vanishing of $(_NS_0,_N\mathfrak{g}_{\mathcal{A}^\vee})$ in $\widehat{\textnormal{CH}}^g ( \mathcal{A}\setminus  \mathcal{A}[N])_\matQ$
then implies that the Maillot-R\"ossler current is motivic, i.e., 
\[
(2\pi i)^{g-1}(_N\mathfrak{g}_{\mathcal{A}^\vee})|_{\mathcal{A}\setminus \mathcal{A}[N]}\in 
\text{reg}_{\text{an}}\left(H^{2g-1}_{\mathcal{M}}( \mathcal{A}\setminus \mathcal{A}[N],g)\right).
\]
Since the operator $\text{tr}_{[a]}$ defined in \eqref{eq:tr-def} obviously operates on analytic Deligne cohomology and the map $\text{reg}_{\text{an}} $ intertwines this operator with 
$\text{tr}_{[a]}$, we deduce  from property (3) in the definition of $\mathfrak{g}_{\mathcal{A}^\vee} $ the fact:
\begin{lemma} The Maillot-R\"ossler current is in the image of the regulator from $H^{2g-1}_{\mathcal{M}}( \mathcal{A}\setminus \mathcal{A}[N],g)^{(0)}$:
\[
 (2\pi i)^{g-1}(_N\mathfrak{g}_{\mathcal{A}^\vee})|_{\mathcal{A}\setminus \mathcal{A}[N]}\in \text{reg}_{\text{an}}\left(H^{2g-1}_{\mathcal{M}}( \mathcal{A}\setminus \mathcal{A}[N],g)^{(0)}\right).
\]
\end{lemma}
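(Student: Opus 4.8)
\emph{Proof plan.} The plan is to upgrade the preceding lemma (which only records that the current is motivic) by keeping track of the trace operators $\textnormal{tr}_{[a]}$. By that lemma we may write
\[
(2\pi i)^{g-1}({}_N\mathfrak{g}_{\mathcal{A}^\vee})|_{\mathcal{A}\setminus\mathcal{A}[N]}=\textnormal{reg}_{\textnormal{an}}(\alpha)
\]
for some $\alpha\in H^{2g-1}_{\mathcal{M}}(\mathcal{A}\setminus\mathcal{A}[N],g)$. Using the eigenspace decomposition of motivic cohomology (Proposition 2.2.1 in \cite{kingsross}) I would write $\alpha=\sum_{r}\alpha^{(r)}$ with $\alpha^{(r)}\in H^{2g-1}_{\mathcal{M}}(\mathcal{A}\setminus\mathcal{A}[N],g)^{(r)}$. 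Since $\textnormal{reg}_{\textnormal{an}}$ intertwines $\textnormal{tr}_{[a]}$ on both sides, applying the equivariance to the relation $(\textnormal{tr}_{[a]}-a^r)^k\alpha^{(r)}=0$ shows that each $\textnormal{reg}_{\textnormal{an}}(\alpha^{(r)})$ lies in the generalized eigenspace of $\textnormal{tr}_{[a]}$ of weight $r$ (eigenvalue $a^r$) in $H^{2g-1}_{D^{\textnormal{an}}}((\mathcal{A}\setminus\mathcal{A}[N])_\matR,\matR(g))$. The whole statement therefore reduces to showing that the Maillot-R\"ossler class itself lies in the weight-$0$ eigenspace, i.e. is fixed by $\textnormal{tr}_{[a]}$.

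To prove this I would compute $\textnormal{tr}_{[a]}=[a]_*\circ j^*$ directly on the class. As this class is the restriction to $W:=\mathcal{A}\setminus\mathcal{A}[N]$ of the globally defined current $(2\pi i)^{g-1}({}_N\mathfrak{g}_{\mathcal{A}^\vee})$, and $j^*$ is restriction to $[a]^{-1}(W)=\mathcal{A}\setminus\mathcal{A}[aN]\subseteq W$ followed by the finite push-forward along $[a]\colon[a]^{-1}(W)\to W$, one checks that here restriction commutes with this finite push-forward, so that $\textnormal{tr}_{[a]}$ applied to the class agrees with the restriction to $W$ of the global push-forward $[a]_*({}_N\mathfrak{g}_{\mathcal{A}^\vee})$. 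It then remains to see that $[a]_*({}_N\mathfrak{g}_{\mathcal{A}^\vee})={}_N\mathfrak{g}_{\mathcal{A}^\vee}$. Writing ${}_N\mathfrak{g}_{\mathcal{A}^\vee}=([N]^*-N^{2g})\mathfrak{g}_{\mathcal{A}^\vee}$ and using that the commuting isogenies $[a]$ and $[N]$ induce commuting operators $[a]_*$ and $[N]^*$ on currents, this follows at once from property (3), namely $[a]_*\mathfrak{g}_{\mathcal{A}^\vee}=\mathfrak{g}_{\mathcal{A}^\vee}$. Hence $\textnormal{tr}_{[a]}$ fixes the Maillot-R\"ossler class, placing it in the weight-$0$ eigenspace.

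Finally I would conclude by pure linear algebra. The class equals $\sum_r\textnormal{reg}_{\textnormal{an}}(\alpha^{(r)})$ and has just been shown to lie in the weight-$0$ eigenspace; since the generalized eigenspaces of a single operator attached to the pairwise distinct eigenvalues $a^r$ (for $0\le r\le 2g$ and $a\ge 2$) are always in direct sum, comparing weight-$0$ components forces the class to equal $\textnormal{reg}_{\textnormal{an}}(\alpha^{(0)})$ with $\alpha^{(0)}\in H^{2g-1}_{\mathcal{M}}(\mathcal{A}\setminus\mathcal{A}[N],g)^{(0)}$, which is exactly the assertion. The hard part is the middle step: one must verify carefully that the trace map $\textnormal{tr}_{[a]}$, assembled from restriction and finite push-forward on the open $W$, really coincides with the restriction of the global $[a]_*$, and that $[a]_*$ commutes with $[N]^*$ on currents. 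Everything else is formal once property (3) is invoked.
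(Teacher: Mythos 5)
Your overall strategy is exactly the paper's own: the paper proves this lemma in a single sentence, combining the preceding lemma (the class lies in the image of $\textnormal{reg}_{\textnormal{an}}$), the fact that $\textnormal{reg}_{\textnormal{an}}$ intertwines $\textnormal{tr}_{[a]}$ on motivic and analytic Deligne cohomology, property (3) of Theorem \ref{ga}, and the generalized-eigenspace decomposition of motivic cohomology. Your write-up is the correct expansion of precisely these ingredients, and your concluding linear algebra (generalized eigenspaces for distinct eigenvalues $a^{r}$ are in direct sum, so the weight-zero class must equal $\textnormal{reg}_{\textnormal{an}}(\alpha^{(0)})$) is sound.

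There is, however, one genuine error in the step you yourself flag as ``the hard part''. Commuting isogenies do \emph{not} automatically give commuting operators $[a]_*$ and $[N]^{*}$ on currents. The relevant base-change square (with $[a]$ horizontal and $[N]$ vertical) is Cartesian if and only if $\gcd(a,N)=1$: the natural map $\mathcal{A}\to \mathcal{A}\times_{[a],\mathcal{A},[N]}\mathcal{A}$, $z\mapsto (Nz,az)$, has kernel $\mathcal{A}[\gcd(a,N)]$, and for $\gcd(a,N)=d>1$ the fibre product has $d^{2g}$ connected components, so base change fails. Concretely, already for $a=N$ your claimed identity $[a]_*[N]^{*}\mathfrak{g}_{\mathcal{A}^\vee}=[N]^{*}[a]_*\mathfrak{g}_{\mathcal{A}^\vee}$ breaks down: the projection formula gives $[N]_*[N]^{*}\mathfrak{g}_{\mathcal{A}^\vee}=N^{2g}\mathfrak{g}_{\mathcal{A}^\vee}$, while property (3) gives $[N]^{*}[N]_*\mathfrak{g}_{\mathcal{A}^\vee}=[N]^{*}\mathfrak{g}_{\mathcal{A}^\vee}$; if these agreed, then ${}_N\mathfrak{g}_{\mathcal{A}^\vee}=([N]^{*}-N^{2g})\mathfrak{g}_{\mathcal{A}^\vee}$ would vanish, which is false, since $\textnormal{dd}^c({}_N\mathfrak{g}_{\mathcal{A}^\vee})+\delta_{{}_NS_0}$ is smooth and the cycle ${}_NS_0=[N]^{*}S_0-N^{2g}S_0$ is nonzero, so ${}_N\mathfrak{g}_{\mathcal{A}^\vee}=0$ would make the non-smooth current $\delta_{{}_NS_0}$ smooth. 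The repair is cheap but must be said: since the eigenspace decomposition of $H^{2g-1}_{\mathcal{M}}(\mathcal{A}\setminus\mathcal{A}[N],g)$ is independent of $a$, it suffices to run your entire argument for one $a$ coprime to $N$ (e.g. $a=N+1$). For such $a$ the square above is Cartesian, base change along the finite covering maps $[a]$ and $[N]$ of $\mathcal{A}(\matC)$ is valid for currents, and all your remaining steps (restriction commuting with finite push-forward, equivariance of $\textnormal{reg}_{\textnormal{an}}$, and the eigenspace comparison) go through unchanged.
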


\section{The comparison between $\textnormal{pol}^0$ and the class $\mathfrak{g}_{\mathcal{A}^\vee}$}
In this section we give an easy conceptual proof of the comparison result between $\textnormal{pol}^0$ and the class $\mathfrak{g}_{\mathcal{A}^\vee} $.
\subsection{A commutative diagram} 

The following lemma, proved by R\"ossler and Kings, is the key for the proof of our comparison result.
\begin{lemma}[\cite{kingsross}, Lemma 4.2.6] \label{applic}
 The following diagram is commutative
  \small{\begin{center}
\begin{tikzpicture}[description/.style={fill=white,inner sep=2pt}, ciao/.style={fill=red,inner sep=2pt}]
\matrix (m) [matrix of math nodes, row sep=3.5em,
column sep=2.5em, text height=1.5ex, text depth=0.25ex]
{ H^{2g-1}_{\mathcal{M}}( \mathcal{A}\setminus \mathcal{A}[N],g)^{(0)}  &  H^{2g}_{\mathcal{M}, \mathcal{A}[N]\setminus S_0}( \mathcal{A}\setminus S_0,g)^{(0)} \\
 H^{2g-1}_{D^{\textnormal{an}}}( (\mathcal{A}\setminus \mathcal{A}[N])_\matR,\matR(g)) & 
 \\  H^{2g-1}_{B}( (\mathcal{A}\setminus \mathcal{A}[N])(\matC),\matR(g))
& H^{2g}_{B, \mathcal{A}[N]\setminus S_0}( (\mathcal{A}\setminus S_0)(\matC),\matR(g)) \\
  };
	\path[->,font=\scriptsize]
		(m-1-1) edge node[right] {$ \textnormal{reg}_{\textnormal{an}}$} (m-2-1)
		(m-1-2) edge node[right] {$ \textnormal{reg}_{\textnormal{B}}$} (m-3-2)
		(m-1-1) edge node[above] {$\res $} (m-1-2)
		(m-1-1) edge node[below] {$\simeq $} (m-1-2)
		(m-2-1) edge node[right] {$\phi_B $} (m-3-1)
		;
	\path[->,font=\scriptsize] 
		(m-3-1) edge node[above] {$  \res_B$} (m-3-2)
		;
\end{tikzpicture}
\end{center}}\normalsize{
and the map $ \textnormal{reg}_{\textnormal{B}}$ is injective.}
\end{lemma}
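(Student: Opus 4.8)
The plan is to deduce everything from the functoriality of the regulator as a morphism of cohomology theories equipped with localization sequences. Write $U=\mathcal{A}\setminus\mathcal{A}[N]$, $V=\mathcal{A}\setminus S_0$ and $Z=\mathcal{A}[N]\setminus S_0$, so that $Z\hookrightarrow V$ is a closed immersion of codimension $g$ with open complement $U$. For each of the three theories in play (motivic, analytic Deligne, Betti) the pair $(V,Z)$ produces a long exact localization sequence whose degree-$(2g-1)$ connecting homomorphism is the corresponding residue. The first step is to record that the top map $\res$ and the bottom map $\res_B$ are exactly these connecting homomorphisms into cohomology with support: for $\res_B$ this is the content of Lemma~\ref{lemma:res-B}, read off the explicit complexes of currents, while for $\res$ it is the motivic residue together with purity $H^{2g}_{\mathcal{M},Z}(V,g)\cong H^0_{\mathcal{M}}(Z,0)$, which also exhibits $\res$ as an isomorphism on the weight-$0$ eigenspace.

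Next I would establish two compatibilities. First, that $\phi_B\circ\textnormal{reg}_{\textnormal{an}}$ agrees with the Betti regulator on $H^{2g-1}_{\mathcal{M}}(U,g)$: by construction $\textnormal{reg}_{\textnormal{an}}$ is the Beilinson regulator into Deligne--Beilinson cohomology followed by the forgetful map to analytic Deligne cohomology, and $\phi_B$ is the further comparison map to Betti cohomology, so the total composite is the Betti realization of the regulator. Concretely one checks this on the explicit complexes, matching the formula $\phi_B(\tilde x)=[4\pi i\,\textnormal{d}^c x]$ of the preceding lemma against the chain-level cycle class computed with currents. Second, functoriality of the regulator with respect to the localization sequence of $(V,Z)$ yields that the Betti regulator intertwines the motivic residue with $\res_B$. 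Combining the two gives $\res_B\circ\phi_B\circ\textnormal{reg}_{\textnormal{an}}=\res_B\circ\textnormal{reg}_{\textnormal{B}}^{U}=\textnormal{reg}_{\textnormal{B}}\circ\res$, where $\textnormal{reg}_{\textnormal{B}}^{U}$ is the Betti regulator on $U$ and the right-hand $\textnormal{reg}_{\textnormal{B}}$ is the Betti regulator with supports; this is precisely the asserted commutativity.

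For the injectivity of $\textnormal{reg}_{\textnormal{B}}$ I would use purity again. The purity (Gysin) isomorphisms identify $H^{2g}_{\mathcal{M},Z}(V,g)\cong H^0_{\mathcal{M}}(Z,0)$ and $H^{2g}_{B,Z}(V(\matC),\matR(g))\cong H^0_{B}(Z(\matC),\matR(0))$, under which $\textnormal{reg}_{\textnormal{B}}$ becomes the cycle class map on $H^0$. Since $Z=\mathcal{A}[N]\setminus S_0$ is finite \'etale over $S$, the group $H^0_{\mathcal{M}}(Z,0)$ is the $\matQ$-span of the fundamental classes of the connected components of $Z$, and these are sent to nonzero classes with pairwise disjoint support in $Z(\matC)$, hence linearly independent. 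Therefore the cycle class map is injective, and so is its restriction to the weight-$0$ eigenspace.

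The main obstacle is the chain-level bookkeeping in the first compatibility of the second step: because all the groups are presented through explicit complexes of smooth forms and currents, with and without supports, one must verify that the regulator, the forgetful map $\phi_B$, and the connecting homomorphism $\res_B$ match on the nose, reconciling the normalisation $4\pi i\,\textnormal{d}^c$ with the sign in $\res_B([\eta])=[0,-\eta]$. Once this is settled, the remainder of the argument is formal functoriality of the localization sequences.
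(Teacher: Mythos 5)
Your proposal cannot be compared against an in-paper argument, because this paper does not prove the lemma at all: it is imported verbatim from Kings--R\"ossler (\cite{kingsross}, Lemma 4.2.6), and the present paper only uses it as a black box. Judged on its own merits, your reconstruction is sound and is the natural argument: identify the two horizontal maps as connecting homomorphisms of the localization sequences for the pair $(\mathcal{A}\setminus S_0,\ \mathcal{A}[N]\setminus S_0)$; reduce commutativity to (i) the identity $\phi_B\circ\textnormal{reg}_{\textnormal{an}}=\textnormal{reg}_{\textnormal{B}}$ on $\mathcal{A}\setminus\mathcal{A}[N]$, which holds because both $\phi_B$ and the forgetful map are induced by canonical maps of complexes, so the composite is the Betti realization of the Beilinson regulator, and (ii) the compatibility of the Betti realization with connecting homomorphisms; and get injectivity from purity plus the fact that $\mathcal{A}[N]\setminus S_0$ is finite \'etale (we are in characteristic zero), so that distinct components have disjoint, nonempty sets of complex points. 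This route is exactly what makes the present paper's strategy work: Betti cohomology has a residue sequence while analytic Deligne cohomology does not, so one only ever uses the localization sequence downstairs in Betti cohomology. Two caveats. First, the isomorphism assertion on the top arrow (the ``$\simeq$'') does not follow from purity, as your first paragraph suggests; it requires the vanishing $H^{\cdot}_{\mathcal{M}}(\mathcal{A}\setminus S_0,*)^{(0)}=0$ (Corollary 2.2.2 of \cite{kingsross}, recalled in Section 1 of this paper), and you should cite that rather than attribute it to the purity identification. Second, the ``chain-level bookkeeping'' you flag as the main obstacle is not actually needed for this lemma: since $\phi_B$ and $\res_B$ are \emph{defined} as the canonical comparison map and the canonical connecting homomorphism (the explicit formulas $\phi_B(\tilde x)=[4\pi i\,\textnormal{d}^cx]$ and $\res_B([\eta])=[0,-\eta]$ are separate, already-cited lemmas), the commutativity follows from the canonical characterizations and functoriality alone; the explicit formulas only become relevant later, in the proof of Theorem \ref{theo}, where the paper does use them.
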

\subsection{The comparison result} We are now ready to reprove the  comparison result of Kings and R\"ossler.
\begin{theo}[\cite{kingsross}]\label{theo}
  We have the equality
   \[
  -2 \cdot \textnormal{reg}_{\textnormal{an}}(\textnormal{pol}^0)=(2\pi i)^{g-1}(_N\mathfrak{g}_{\mathcal{A}^\vee})|_{\mathcal{A}\setminus \mathcal{A}[N]}.
  \]
  \end{theo}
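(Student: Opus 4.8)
The plan is to reduce the asserted equality of two classes in analytic Deligne cohomology to an equality of their residues in Betti cohomology with support, and then to identify both residues with the fundamental class $1_N^\circ$. First I would record that both sides lie in the image of $\textnormal{reg}_{\textnormal{an}}$ restricted to the zero eigenspace $H^{2g-1}_{\mathcal{M}}(\mathcal{A}\setminus\mathcal{A}[N],g)^{(0)}$: the class $-2\,\textnormal{reg}_{\textnormal{an}}(\textnormal{pol}^0)$ by construction, and $(2\pi i)^{g-1}({}_N\mathfrak{g}_{\mathcal{A}^\vee})|_{\mathcal{A}\setminus\mathcal{A}[N]}$ by the last Lemma of Section 2. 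On this image the composite $\res_B\circ\phi_B$ detects equality: by the commutativity of the diagram in Lemma \ref{applic} one has $\res_B\circ\phi_B\circ\textnormal{reg}_{\textnormal{an}}=\textnormal{reg}_{\textnormal{B}}\circ\res$ on the eigenspace, and the right-hand side is injective because $\res$ is an isomorphism and $\textnormal{reg}_{\textnormal{B}}$ is injective. Consequently, if two classes coming from the eigenspace have the same image under $\res_B\circ\phi_B$, they are equal, so it suffices to compare the two residues.

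For the polylogarithm side this is immediate: commutativity together with the defining property $\res(\textnormal{pol}^0)=1_N^\circ$ gives $\res_B\bigl(\phi_B(-2\,\textnormal{reg}_{\textnormal{an}}(\textnormal{pol}^0))\bigr)=-2\,\textnormal{reg}_{\textnormal{B}}(1_N^\circ)$. It then remains to compute the residue of the Maillot-Rössler class directly and to verify that it also equals $-2\,\textnormal{reg}_{\textnormal{B}}(1_N^\circ)$. Here I would use the explicit representatives: by the Lemma describing $\phi_B$ one has $\phi_B\bigl((2\pi i)^{g-1}({}_N\mathfrak{g}_{\mathcal{A}^\vee})|\bigr)=[4\pi i\,\textnormal{d}^c x]$, where $x$ is the smooth form representing $(2\pi i)^{g-1}({}_N\mathfrak{g}_{\mathcal{A}^\vee})$ on $\mathcal{A}\setminus\mathcal{A}[N]$, and by Lemma \ref{lemma:res-B} the residue $\res_B$ sends this to the class $[0,-4\pi i\,\textnormal{d}^c x]$ in $H^{2g}_{B,\mathcal{A}[N]\setminus S_0}((\mathcal{A}\setminus S_0)(\matC),\matR(g))$. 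Since ${}_N\mathfrak{g}_{\mathcal{A}^\vee}$ is a Green current for ${}_NS_0=[N]^*S_0-N^{2g}S_0=\mathcal{A}[N]-N^{2g}S_0$, whose restriction to $\mathcal{A}\setminus S_0$ is the cycle $\mathcal{A}[N]\setminus S_0$ (the $N^{2g}S_0$ part being supported on the removed locus), the Poincaré-Lelong relation $\textnormal{dd}^c({}_N\mathfrak{g}_{\mathcal{A}^\vee})+\delta_{{}_NS_0}\in E^{g,g}$ identifies this residue with the Betti cycle class of $\mathcal{A}[N]\setminus S_0$ up to a universal normalization constant.

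The main obstacle is pinning down that constant. One has to track carefully the interplay of the twist $(2\pi i)^{g-1}$, the factor $4\pi i$ produced by $\phi_B$, the chosen convention for $\textnormal{d}^c$ and $\textnormal{dd}^c$, and the normalization of the Betti cycle class inside $(2\pi i)^g D^{2g}$, so as to see that the residue of the Maillot-Rössler class is \emph{exactly} $-2\,\textnormal{reg}_{\textnormal{B}}(1_N^\circ)$ and not merely a nonzero multiple of it. Once this bookkeeping confirms the factor $-2$, the two images under $\res_B\circ\phi_B$ agree, and the injectivity established in the first step yields the claimed identity $-2\,\textnormal{reg}_{\textnormal{an}}(\textnormal{pol}^0)=(2\pi i)^{g-1}({}_N\mathfrak{g}_{\mathcal{A}^\vee})|_{\mathcal{A}\setminus\mathcal{A}[N]}$.
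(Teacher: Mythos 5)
Your proposal is correct and follows essentially the same route as the paper's proof: reduce via the commutative diagram and injectivity in Lemma \ref{applic} to comparing images under $\textnormal{reg}_{\textnormal{B}}\circ\res$ (equivalently $\res_B\circ\phi_B$) of classes coming from $H^{2g-1}_{\mathcal{M}}(\mathcal{A}\setminus\mathcal{A}[N],g)^{(0)}$, compute the polylogarithm side from $\res(\textnormal{pol}^0)=1_N^\circ$, and compute the Maillot--R\"ossler side from the explicit formulas for $\phi_B$ and $\res_B$ together with [\cite{farak}, Proposition 5.2]. The ``bookkeeping obstacle'' you leave open is in fact immediate from the lemmas you already cite: since $4\pi i\,(2\pi i)^{g-1}=2(2\pi i)^g$, the two residue classes differ by the class of the pair $\bigl(2(2\pi i)^g\delta_{1_N^\circ},\,-2(2\pi i)^g\textnormal{d}^c({}_N\mathfrak{g}_{\mathcal{A}^\vee})|_{\mathcal{A}\setminus\mathcal{A}[N]}\bigr)$, which is exactly the coboundary of $\theta=-2(2\pi i)^g\textnormal{d}^c\bigl({}_N\mathfrak{g}_{\mathcal{A}^\vee}|_{\mathcal{A}\setminus S_0}\bigr)$ because $(2\pi i)^g\delta_{1_N^\circ}+(2\pi i)^g\textnormal{dd}^c({}_N\mathfrak{g}_{\mathcal{A}^\vee}|_{\mathcal{A}\setminus S_0})=0$, so the factor $-2$ comes out exactly as claimed.
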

\begin{proof}
 Let $\psi\in H^{2g-1}_{\mathcal{M}}( \mathcal{A}\setminus \mathcal{A}[N],g)^{(0)}$ be such that 
 $ \text{reg}_{\text{an}}(\psi)=-\frac{(2\pi i)^{g-1}}{2}(_N\mathfrak{g}_{\mathcal{A}^\vee})|_{\mathcal{A}\setminus \mathcal{A}[N]} $. 
 By Lemma \ref{applic}, it is sufficient to show that $\psi$ and $\textnormal{pol}^0 $ have the same image under $ \textnormal{reg}_{\textnormal{B}}\circ \textnormal{res} $.
Now, by definition of $\textnormal{pol}^0 $ we have 
\[
  \textnormal{reg}_{\textnormal{B}}(\textnormal{res}(\textnormal{pol}^0))=\textnormal{reg}_{\textnormal{B}}(1_N^\circ)=\left[(2\pi i)^g\delta_{1_N^\circ},0\right],
\]
and by the description of $\res_B$ in Lemma \ref{lemma:res-B}
 we have 
 \begin{align*}
 \textnormal{reg}_{\textnormal{B}}(\textnormal{res}(\psi))=\res_B(\phi_B(\textnormal{reg}_{\textnormal{an}}(\psi)))&=
\res_B\left(\left[-(2\pi i)^g \textnormal{d}^c\left(_N\mathfrak{g}_{\mathcal{A}^\vee}|_{\mathcal{A}\setminus \mathcal{A}[N]}\right)\right] \right)\\
 &=\left[0,(2\pi i)^g
 \textnormal{d}^c(_N\mathfrak{g}_{\mathcal{A}^\vee})|_{\mathcal{A}\setminus \mathcal{A}[N]}\right].
\end{align*}
The difference  $ \textnormal{reg}_{\textnormal{B}}(\text{res}((\textnormal{pol}^0-\psi))) $ is then represented by the pair 
\[\left((2\pi i)^g\delta_{1_N^\circ}, - (2\pi i)^g
 \textnormal{d}^c(_N\mathfrak{g}_{\mathcal{A}^\vee})|_{\mathcal{A}\setminus \mathcal{A}[N]}\right),\]
which is a coboundary, since (by [\cite{farak}, Proposition 5.2])
\[(2\pi i)^{g}\delta_{1_N^\circ}+(2\pi i)^{g}\textnormal{dd}^c(_N\mathfrak{g}_{\mathcal{A}^\vee}|_{\mathcal{A}\setminus S_0})=0.\]
\end{proof}
\section{A decomposition of the arithmetic Chow group}
Recall the exact sequence (cf. Theorem and Remark in 3.3.5 \cite{gilsou}) 
\begin{equation}\label{eq:fundamental-ex-seq}
  H^{2p-1}_{\mathcal{M}}(\mathcal{A},p) \to \widetilde{E}^{p-1,p-1}(\mathcal{A}_{\matR}) \to
 \widehat{\textnormal{CH}}^p ( \mathcal{A})_\matQ  \to 
 {\textnormal{CH}}^p ( \mathcal{A})_\matQ \to 0.
\end{equation}
The endomorphism $[n]^{*}$ acts on this sequence and we want to study the decomposition into generalized eigenspaces. Denote by $E^{p,q}_\mathcal{A}$ the sheaf of $p,q$-forms on $\mathcal{A}(\matC)$.
For the next result observe that we have an isomorphism of sheaves $\pi^{*}\epsilon^{*}E^{p,q}_\mathcal{A}\cong E^{p,q}_{\mathcal{A}}$, which identifies the pull-back of sections of $\epsilon^{*}E^{p,q}_\mathcal{A}$  on the base with the translation invariant differential forms on $\mathcal{A}$. For a $\mathcal{C}^{\infty}$ section $a:S(\matC)\to  \mathcal{A}(\matC)$, we denote by $\tau_a:\mathcal{A}(\matC)\to \mathcal{A}(\matC)$ the translation by $a$. A differential form $\omega$ is translation invariant, if $\tau_a^{*}\omega=\omega$ for all sections $a$.
\begin{theo}\label{eigen}
Let $n\ge 2$ and $\omega \in \widetilde{E}^{p,q}(\mathcal{A}_{\matR})$.
Assume that $\omega$ is a generalized eigenvector for $[n]^{*}$ with eigenvalue $\lambda$, i.e. $([n]^{*}-\lambda)^{k}\omega=0$, for some $k\ge 1$. Then the form $\omega$ is translation invariant. In particular, there is a section $\eta\in \epsilon^{*}{E}^{p,q}_\mathcal{A}(S_\matR)$ with $\omega=\pi^{*}\eta$.
Moreover, one has
$[n]^{*}\omega= n^{p+q}\omega$, i.e., $\omega$ is an eigenvector with eigenvalue $n^{p+q}$.
\end{theo}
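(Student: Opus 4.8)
The plan is to reduce first to translation-invariant forms by a fibrewise averaging argument, and then to analyse $[n]^{*}$ on invariant forms through a filtration by the number of horizontal differentials. To carry out the reduction I would introduce the fibrewise averaging operator $P\omega:=\int_{\mathcal{A}/S}\tau_{y}^{*}\omega\,d\mu(y)$, integrating over each fibre against the Haar probability measure. Since the $\tau_{y}$ are holomorphic automorphisms over $S$, the operator $P$ commutes with $\partial$ and $\bar\partial$, hence descends to $\widetilde{E}^{p,q}(\mathcal{A}_{\matR})$; it is a projector onto the translation-invariant classes, and the relation $[n]\circ\tau_{y}=\tau_{ny}\circ[n]$ together with invariance of Haar measure under $[n]$ gives $P[n]^{*}=[n]^{*}P$. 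Thus if $\omega$ is a generalized eigenvector so are $P\omega$ and $(1-P)\omega$, and it suffices to treat these separately. To kill $(1-P)\omega$ I would use the decomposition of $E^{p,q}$ into isotypic components $E^{p,q}_{\chi}$ for the characters $\chi=\chi_{\mathbf{m}}$ of the fibre torus: $\partial,\bar\partial$ preserve each component (so the decomposition passes to $\widetilde{E}$), while $[n]^{*}$ sends the $\mathbf{m}$-component to the $n\mathbf{m}$-component. Grouping the nonzero $\mathbf{m}$ into rays $\{n^{j}\mathbf{m}_{0}\}_{j\ge 0}$ with $\mathbf{m}_{0}$ not divisible by $n$, the operator $[n]^{*}$ acts along each ray as a unilateral operator-weighted shift; expanding $([n]^{*}-\lambda)^{k}$ and reading off the bottom component of each ray forces, inductively and for $\lambda\neq 0$, all components to vanish, so $(1-P)\omega=0$ and $\omega=P\omega$ is translation-invariant, hence $\omega=\pi^{*}\eta$ via the isomorphism $\pi^{*}\epsilon^{*}E^{p,q}_{\mathcal{A}}\cong E^{p,q}_{\mathcal{A}}$. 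The value $\lambda=0$ is then excluded, since on invariant forms $[n]^{*}$ has only the nonzero eigenvalues $n^{a}$.

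For the eigenvalue statement I would filter the invariant forms by the number $b$ of horizontal (base) differentials. Because $[n]^{*}$ is the identity on $\pi^{*}\Omega_{S}$ and multiplication by $n$ on $\Omega_{\mathcal{A}/S}$ up to a horizontal correction, it preserves this filtration and acts on the $b$-th graded piece by the scalar $n^{\,p+q-b}$. As the eigenvalues $n^{p+q},n^{p+q-1},\dots$ are pairwise distinct for $n\ge 2$, the filtered operator is diagonalizable over the ring of base functions (the differences $n^{a}-n^{a'}$ being invertible), so $\omega$ is in fact an honest eigenvector lying in the eigenspace $V_{a}$ with $\lambda=n^{a}$. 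It then remains to show that $V_{a}$ maps to $0$ in $\widetilde{E}^{p,q}(\mathcal{A}_{\matR})$ whenever $a<p+q$, that is, that every translation-invariant form carrying at least one horizontal differential lies in $\textnormal{Im}\,\partial+\textnormal{Im}\,\bar\partial$; granting this, only the top piece $V_{p+q}$ of full fibre degree survives, and there $[n]^{*}$ acts by $n^{p+q}$, yielding $[n]^{*}\omega=n^{p+q}\omega$.

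I expect this last vanishing to be the main obstacle. The natural mechanism is that the horizontal directions are exact for the complex of invariant forms: differentiating an invariant relative holomorphic form $\omega_{i}$ produces a form whose fibre restriction vanishes, i.e. a horizontal form lying in $\textnormal{Im}\,\partial$, and one wants to promote this to the statement that every invariant form with a horizontal factor is a $\partial$- or $\bar\partial$-image. Concretely I would try to write $\pi^{*}\beta\wedge\mu=\partial(h\,\mu)\mp h\,\partial\mu$ with $\partial h=\pi^{*}\beta$ and then induct, the difficulty being to produce the primitive $h$ globally and to control the error term $\partial\mu$, which forces one to use the fibration structure and the closedness of the relevant invariant forms. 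If the direct primitive construction does not close up I would instead argue fibrewise by Hodge theory on the compact Kähler fibres, where invariant representatives are harmonic and horizontal classes restrict to zero, and then propagate the vanishing over the base.
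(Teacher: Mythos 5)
Your first half (translation invariance) is correct in outline but takes a genuinely different, and analytically much heavier, route than the paper, and as written it has a gap: the isotypic decomposition under the fibre torus is an infinite Fourier decomposition, so knowing that every component $\omega_{\mathbf{m}}$, $\mathbf{m}\neq 0$, lies in $\mathrm{Im}\,\partial+\mathrm{Im}\,\bar\partial$ does not formally give that the sum $(1-P)\omega$ does: you must choose primitives for infinitely many modes and prove that the resulting series converge to smooth forms, on a total space which is in general non-compact (where $\mathrm{Im}\,\partial+\mathrm{Im}\,\bar\partial$ need not even be closed). There is also a monodromy point --- the characters $\chi_{\mathbf{m}}$ only form a local system on the base, so your decomposition is local on $S$ while the exactness you need is global --- and a small circularity: you use $\lambda\neq 0$ inside the ray argument but justify it only afterwards, from invariance. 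The paper needs none of this: $[n]_*[n]^{*}=n^{2g}$ forces $\lambda\neq 0$ at the outset, and since torsion points are fibrewise dense, the identity $\tau_a^{*}[n]^{*}=[n]^{*}\tau_{[n]a}^{*}$ gives, by a double induction (on the order of the torsion point and on $k$), invariance under all torsion translations; no averaging or Fourier analysis is required.

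The second half is where the genuine gap lies, and you located it precisely: your plan needs that every translation-invariant form carrying at least one horizontal differential vanishes in $\widetilde{E}^{p,q}(\mathcal{A}_{\matR})$. This cannot be proved, because it is false. Take $\mathcal{A}=E\times S\to S$ with $E$, $S$ elliptic curves, and let $\eta\in E^{1,1}(S_\matR)$ be obtained from a K\"ahler form by anti-symmetrizing under $F_\infty$, so that $\int_S\eta\neq 0$. Then $\pi^{*}\eta$ is translation invariant, purely horizontal, and satisfies $[n]^{*}\pi^{*}\eta=\pi^{*}\eta$; yet its class in $\widetilde{E}^{1,1}(\mathcal{A}_{\matR})$ is nonzero, since pairing with the closed vertical form $pr_E^{*}\omega_E$ and integrating over the compact manifold $\mathcal{A}(\matC)$ annihilates $\mathrm{Im}\,\partial+\mathrm{Im}\,\bar\partial$ (Stokes) but not $\pi^{*}\eta$. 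So this class is an eigenvector with eigenvalue $n^{0}\neq n^{p+q}$, and your final vanishing step --- and with it the ``Moreover'' clause of Theorem \ref{eigen} taken literally --- fails. Your filtration analysis is in fact more careful here than the paper's own proof, whose closing sentence (that $[n]^{*}$ acts by $n^{p+q}$ on all invariant forms) overlooks exactly the horizontal components you isolate. What the argument really proves, and the only thing used downstream, is: every generalized eigenvector class is represented by an invariant form, hence is a sum of honest eigenvectors with eigenvalues $n^{a}$, $0\le a\le p+q$; in particular $\widetilde{E}^{g-1,g-1}(\mathcal{A}_{\matR})(a)=0$ for all $a>2(g-1)$, which is what Corollary \ref{cor:exact-eigenspaces} and Theorem \ref{uniq} actually require (they only invoke the case $a=2g$).
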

\begin{proof} The statement that $\omega$ is translation invariant does not depend on the complex structure. We
use that locally on the base the family of complex tori $\pi:\mathcal{A}(\matC)\to S(\matC)$ is as a $\mathcal{C}^{\infty}$-manifold of the form 
$U\times (S^{1})^{2g}$, where $U\subset S(\matC)$ is open and $S^{1}=\matR/\matZ$ is a real torus. In this situation it suffices to show that $\omega$ is translation invariant under a dense subset of points of 
$(S^{1})^{2g}$. 

We start to prove the following claim: if the form $\eta:=([n]^{*}-\lambda)\omega$ is translation invariant, then $\omega$ is translation invariant.

First note that $\lambda\neq 0$ because $[n]_*[n]^{*}\omega=n^{2g}\omega$, which implies that $[n]^{*}$ is injective.
As the set $\{a\in (S^{1})^{2g}\mid [n^{r}](a)=0\mbox{ for some }r\ge 0\}$
is dense in $(S^{1})^{2g}$, by induction over $r$ it suffices  to show that $\tau_a^{*}\omega=\omega$ for $a$ with $[n^{r}](a)=0$. The case $r=0$ is trivial because then $a=0$. Suppose we know that $\omega$ is translation invariant for all $b$ with $[n^{r-1}](b)=0$ and let $a$ be such that
$[n^{r}](a)=0$. We compute
\[ 
\lambda\tau_a^{*}\omega=\tau_a^{*}([n]^{*}\omega-\eta)=[n]^{*}\tau_{[n]a}^{*}\omega-\tau_a^{*}\eta=[n]^{*}\omega-\eta=\lambda\omega.
\]
As $\lambda\neq 0$, it follows that $\tau_a^{*}\omega=\omega$. This completes the induction step.

We now show by induction on $k$ that $\omega$ with $([n]^{*}-\lambda)^{k}\omega=0$ is translation invariant. For $k=1$ this follows from the claim by setting $\eta=0$. Suppose that all forms $\eta$ with $([n]^{*}-\lambda)^{k-1}\eta=0$ are translation invariant. Then 
$\eta:=([n]^{*}-\lambda)\omega$ is translation invariant and it follows from the claim that also $\omega$ is translation invariant.

For the final statement we just observe that $[n]^{*}$ acts via $n^{p+q}$-multiplication on the bundle $\epsilon^{*}E^{p,q}_\mathcal{A}$ whose sections identify with the translation invariant forms on $\mathcal{A}(\matC)$.
\end{proof}
For the next result we have to consider generalized eigenspaces for 
$[n]^{*}$ and to distinguish these from the generalized eigenspaces for $[n]_*$, we write
\[ 
V(a):=\{v\in V\mid ([n]^*-n^{a})^{k}v=0, \mbox{ for some }k\ge 1\}
\]
\begin{cor}\label{cor:exact-eigenspaces}
For each $a=0,\ldots,2g$ there is an exact sequence
\begin{equation*}
  H^{2p-1}_{\mathcal{M}}(\mathcal{A},p){(a)} \to \widetilde{E}^{p-1,p-1}(\mathcal{A}_{\matR}){(a)} \to
 \widehat{\textnormal{CH}}^p ( \mathcal{A})_\matQ{(a)}  \to 
 {\textnormal{CH}}^p ( \mathcal{A})_\matQ{(a)}
\end{equation*}
of generalized $[n]^{*}$-eigenspaces for the eigenvalue $n^{a}$. In particular, for $a\neq 2(p-1)$ one has an injection
\[ 
\widehat{\textnormal{CH}}^p ( \mathcal{A})_\matQ{(a)}  \hookrightarrow
 {\textnormal{CH}}^p ( \mathcal{A})_\matQ{(a)}. 
\]
\end{cor}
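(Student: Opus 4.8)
The plan is to pass to generalized $[n]^{*}$-eigenspaces in the fundamental exact sequence \eqref{eq:fundamental-ex-seq} and to feed in Theorem \ref{eigen}, which shows that $[n]^{*}$ acts on $\widetilde{E}^{p-1,p-1}(\mathcal{A}_\matR)$ with the single eigenvalue $n^{2(p-1)}$; in particular
\[
\widetilde{E}^{p-1,p-1}(\mathcal{A}_\matR)(a)=0\qquad\text{whenever }a\neq 2(p-1).
\]
Denote by $\rho\colon H^{2p-1}_{\mathcal{M}}(\mathcal{A},p)\to \widetilde{E}^{p-1,p-1}(\mathcal{A}_\matR)$, $\iota\colon \widetilde{E}^{p-1,p-1}(\mathcal{A}_\matR)\to \widehat{\textnormal{CH}}^p(\mathcal{A})_\matQ$ and $\zeta\colon \widehat{\textnormal{CH}}^p(\mathcal{A})_\matQ\to \textnormal{CH}^p(\mathcal{A})_\matQ$ the three arrows of \eqref{eq:fundamental-ex-seq}, all of which commute with $[n]^{*}$. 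Since $\zeta$ is $[n]^{*}$-equivariant, the asserted injectivity of $\widehat{\textnormal{CH}}^p(\mathcal{A})_\matQ(a)\to \textnormal{CH}^p(\mathcal{A})_\matQ(a)$ is equivalent to showing $\ker\zeta\cap\widehat{\textnormal{CH}}^p(\mathcal{A})_\matQ(a)=0$, i.e. that no nonzero generalized $n^{a}$-eigenvector lies in $\im\iota=\ker\zeta$ when $a\neq 2(p-1)$.

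The heart of the matter is a correction argument. Let $\widehat{\xi}$ satisfy $([n]^{*}-n^{a})^{k}\widehat{\xi}=0$ and $\zeta(\widehat{\xi})=0$. Exactness of \eqref{eq:fundamental-ex-seq} at $\widehat{\textnormal{CH}}^p(\mathcal{A})_\matQ$ yields $\tilde h\in\widetilde{E}^{p-1,p-1}(\mathcal{A}_\matR)$ with $\iota(\tilde h)=\widehat{\xi}$. Applying $\iota$ to $\beta:=([n]^{*}-n^{a})^{k}\tilde h$ gives $\iota(\beta)=([n]^{*}-n^{a})^{k}\widehat{\xi}=0$, so $\beta\in\ker\iota=\im\rho$. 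Now $\im\rho$ is a $[n]^{*}$-equivariant quotient of $H^{2p-1}_{\mathcal{M}}(\mathcal{A},p)$, on which $[n]^{*}$ has only finitely many generalized eigenvalues, lying among $n^{0},\dots,n^{2g}$: this is the decomposition into $\textnormal{tr}_{[n]}$-eigenspaces recalled in Section 1 (Proposition 2.2.1 of \cite{kingsross}), re-indexed through $[n]_{*}[n]^{*}=n^{2g}$, which turns the $\textnormal{tr}_{[n]}$-weight $n^{r}$ into the $[n]^{*}$-eigenvalue $n^{2g-r}$. On this finite spectrum $[n]^{*}-n^{a}$ is invertible on every generalized eigenspace for an eigenvalue $\neq n^{a}$ and locally nilpotent on the generalized $n^{a}$-eigenspace. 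Writing $\beta=\beta_{n^{a}}+\beta'$ accordingly, I would solve $([n]^{*}-n^{a})^{k}c=\beta'$ for some $c\in\im\rho$, so that $([n]^{*}-n^{a})^{k}(\tilde h-c)=\beta_{n^{a}}$, and hence $([n]^{*}-n^{a})^{k+N}(\tilde h-c)=0$ once $N$ is large enough that $([n]^{*}-n^{a})^{N}\beta_{n^{a}}=0$.

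Since $c\in\im\rho=\ker\iota$ we have $\iota(\tilde h-c)=\widehat{\xi}$, while $\tilde h-c$ is now a genuine generalized $n^{a}$-eigenvector, i.e. $\tilde h-c\in\widetilde{E}^{p-1,p-1}(\mathcal{A}_\matR)(a)$. For $a\neq 2(p-1)$ this space vanishes by Theorem \ref{eigen}, forcing $\tilde h-c=0$ and thus $\widehat{\xi}=0$, which is exactly the desired injectivity. I expect the main obstacle to be precisely this correction step: one has to absorb the a priori uncontrolled motivic term $\beta$ back into $\ker\iota$ without altering the image $\widehat{\xi}$, and it is here that the finiteness of the $[n]^{*}$-spectrum on motivic cohomology, together with the bookkeeping relation $[n]_{*}[n]^{*}=n^{2g}$, is indispensable. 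Carried out for every eigenvalue $n^{a}$, not only for $a\neq 2(p-1)$, the same argument also establishes the exactness of the eigenspace sequence asserted in the first part of the corollary, of which the present injection is the case $\widetilde{E}^{p-1,p-1}(\mathcal{A}_\matR)(a)=0$.
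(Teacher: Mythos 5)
Your proposal is correct and follows essentially the same route as the paper: both pass to generalized $[n]^{*}$-eigenspaces in the fundamental sequence \eqref{eq:fundamental-ex-seq}, using that $H^{2p-1}_{\mathcal{M}}(\mathcal{A},p)$ has finite $[n]^{*}$-spectrum (the $\textnormal{tr}_{[n]}$-decomposition re-indexed via $[n]_{*}[n]^{*}=n^{2g}$) and concluding with the vanishing $\widetilde{E}^{p-1,p-1}(\mathcal{A}_{\matR})(a)=0$ for $a\neq 2(p-1)$ from Theorem \ref{eigen}. The only difference is presentational: your element-wise correction of the lift $\tilde h$ by $c\in\im\rho$ is exactly what the paper packages abstractly as the exactness of the torsion-submodule functor $T$ over the PID $\matC[X]$ (exact because the submodule is torsion), followed by the primary decomposition of torsion $\matC[X]$-modules.
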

\begin{proof}
The sequence \eqref{eq:fundamental-ex-seq} is a sequence of modules under the principal ideal domain $\matC[X]$, where $X$ acts as $[n]^*$. Note that taking the torsion submodule $TM:=\ker (M\to M\otimes_{\matC[X]}\Quot\matC[X])$ is a left exact functor on short exact sequences 
\[ 
0\to M'\to M\to M''\to 0.
\]
If $M'$ is torsion the functor $T$ is even exact. As $ H^{2p-1}_{\mathcal{M}}(\mathcal{A},p)$ is torsion, the exact sequence \eqref{eq:fundamental-ex-seq} gives rise to an exact sequence
\[ 
0\to T\im( H^{2p-1}_{\mathcal{M}}(\mathcal{A},p))\to T\widetilde{E}^{p-1,p-1}(\mathcal{A}_{\matR})\to T\widehat{\textnormal{CH}}^p ( \mathcal{A})_\matQ \to 
 T{\textnormal{CH}}^p ( \mathcal{A})_\matQ.
\]
As a torsion $\matC[X]$-module is the direct sum of its generalized eigenspaces, the first claim follows. The second statement follows from the first, as $\widetilde{E}^{p-1,p-1}(\mathcal{A}_{\matR}){(a)}=0$ for
$a\neq 2(p-1)$ by Theorem \ref{eigen}.
\end{proof}
\section{An axiomatic  characterization of the Maillot-R\"ossler current }
We want to prove an axiomatic characterization of $(-1)^g p_{1,\ast}
  \left(\widehat{\textnormal{ch}}(\bar{\mathcal{P}})\right)^{(g)}$. The result is the following. 
\begin{theo}\label{uniq}
 Let $\widehat{\xi}$ be an element of $\widehat{\textnormal{CH}}^g (\mathcal{A})_\matQ $ satisfying the following two properties:
\begin{itemize}
 \item $\zeta(\widehat{\xi})=S_0$ in $\textnormal{CH}^g (\mathcal{A})_\matQ $
 \item $([n]^*-n^{2g})^{k}(\widehat{\xi})=0 $ in $\widehat{\textnormal{CH}}^g (\mathcal{A})_\matQ $ for some   $n\geq 2$ and some $k\ge 1$.
\end{itemize}
 Then $\widehat{\xi}=(-1)^g p_{1,\ast}
  \left(\widehat{\textnormal{ch}}(\bar{\mathcal{P}})\right)^{(g)}=(S_0,\mathfrak{g}_{\mathcal{A}^\vee})$.
\end{theo}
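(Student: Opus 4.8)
The plan is to deduce the identity from the injectivity statement in Corollary \ref{cor:exact-eigenspaces}. I would apply that corollary with $p=g$ and $a=2g$; since $2g\neq 2(g-1)$, it provides an injection
\[
\zeta\colon \widehat{\textnormal{CH}}^g(\mathcal{A})_\matQ(2g)\hookrightarrow \textnormal{CH}^g(\mathcal{A})_\matQ(2g),
\]
where $\widehat{\textnormal{CH}}^g(\mathcal{A})_\matQ(2g)$ denotes the generalized $[n]^{*}$-eigenspace for the eigenvalue $n^{2g}$ attached to the integer $n$ supplied by the hypothesis. It therefore suffices to check that both $\widehat{\xi}$ and $(S_0,\mathfrak{g}_{\mathcal{A}^\vee})$ lie in this eigenspace and have the same image under $\zeta$: the difference then lies in $\widehat{\textnormal{CH}}^g(\mathcal{A})_\matQ(2g)$ and is killed by $\zeta$, hence vanishes by injectivity.

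For $\widehat{\xi}$ both points are immediate, as the two bullet-point hypotheses say exactly $\widehat{\xi}\in\widehat{\textnormal{CH}}^g(\mathcal{A})_\matQ(2g)$ and $\zeta(\widehat{\xi})=S_0$. For the Maillot-R\"ossler class, property (1) in Theorem \ref{ga} gives $\zeta(S_0,\mathfrak{g}_{\mathcal{A}^\vee})=S_0$ as well, so the two images agree. The only substantial remaining point is to show
\[
[n]^{*}\,(S_0,\mathfrak{g}_{\mathcal{A}^\vee})=n^{2g}\,(S_0,\mathfrak{g}_{\mathcal{A}^\vee})\quad\text{in }\widehat{\textnormal{CH}}^g(\mathcal{A})_\matQ,
\]
which places $(S_0,\mathfrak{g}_{\mathcal{A}^\vee})$ in $\widehat{\textnormal{CH}}^g(\mathcal{A})_\matQ(2g)$.

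To prove this I would argue directly from the description $(S_0,\mathfrak{g}_{\mathcal{A}^\vee})=(-1)^g p_{1,*}(\widehat{\textnormal{ch}}(\bar{\mathcal{P}}))^{(g)}$ of Theorem \ref{ga}(2) together with the behaviour of the Poincar\'e bundle under isogenies. Flat base change for the Cartesian square formed by $[n]\times\mathrm{id}$, the two copies of $p_1$, and $[n]$ gives $[n]^{*}p_{1,*}=p_{1,*}([n]\times\mathrm{id})^{*}$. Combining this with the functoriality of the arithmetic Chern character and the isometry $([n]\times\mathrm{id})^{*}\bar{\mathcal{P}}\cong(\mathrm{id}\times[n])^{*}\bar{\mathcal{P}}$ (which holds for the canonical metric, the dual isogeny of $[n]$ being again $[n]$) rewrites $[n]^{*}(S_0,\mathfrak{g}_{\mathcal{A}^\vee})$ as $(-1)^g\bigl(p_{1,*}(\mathrm{id}\times[n])^{*}\widehat{\textnormal{ch}}(\bar{\mathcal{P}})\bigr)^{(g)}$. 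Since $\mathrm{id}\times[n]$ is finite flat of degree $n^{2g}$ with $p_1\circ(\mathrm{id}\times[n])=p_1$, the projection formula yields $p_{1,*}(\mathrm{id}\times[n])^{*}=n^{2g}p_{1,*}$, and the claim follows. (As a cross-check, the eigenvalue $n^{2g}$ is consistent with $[n]_*S_0=S_0$ and $[n]_*\mathfrak{g}_{\mathcal{A}^\vee}=\mathfrak{g}_{\mathcal{A}^\vee}$ from Theorem \ref{ga}(3) via the relation $[n]_*[n]^{*}=n^{2g}$.)

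With both classes now in $\widehat{\textnormal{CH}}^g(\mathcal{A})_\matQ(2g)$ and with equal image $S_0$ under $\zeta$, the element $\widehat{\xi}-(S_0,\mathfrak{g}_{\mathcal{A}^\vee})$ lies in $\widehat{\textnormal{CH}}^g(\mathcal{A})_\matQ(2g)$ and is annihilated by $\zeta$, so it is zero by the injection above; this gives the desired equality. I expect the main obstacle to be the verification of the isogeny input at the arithmetic level, namely that $([n]\times\mathrm{id})^{*}\bar{\mathcal{P}}$ and $(\mathrm{id}\times[n])^{*}\bar{\mathcal{P}}$ are genuinely \emph{isometric} for the canonical metric and that flat base change and the projection formula are available for $p_{1,*}$ on the arithmetic Chow groups; these are exactly the points where the geometry of $\bar{\mathcal{P}}$ and the Gillet-Soul\'e formalism have to be invoked with care.
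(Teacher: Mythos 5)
Your proposal is correct and follows essentially the same route as the paper: both arguments apply the injection $\widehat{\textnormal{CH}}^g(\mathcal{A})_\matQ(2g)\hookrightarrow \textnormal{CH}^g(\mathcal{A})_\matQ(2g)$ of Corollary \ref{cor:exact-eigenspaces} (with $p=g$, $a=2g\neq 2(p-1)$) and then observe that $\widehat{\xi}$ and $(S_0,\mathfrak{g}_{\mathcal{A}^\vee})$ have the same image $S_0$ under $\zeta$. The one ingredient you prove rather than quote — that $(S_0,\mathfrak{g}_{\mathcal{A}^\vee})$ itself lies in the generalized $n^{2g}$-eigenspace — is precisely what the paper takes from \cite{farak} (Proposition 5.2, already invoked in Section 2 for the vanishing of $({}_NS_0,{}_N\mathfrak{g}_{\mathcal{A}^\vee})$, and Proposition 6 in the Remark), and your Poincar\'e-bundle argument via base change, the isometry $([n]\times\mathrm{id})^{*}\bar{\mathcal{P}}\cong(\mathrm{id}\times[n])^{*}\bar{\mathcal{P}}$, and the projection formula is the standard proof of that cited fact.
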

\begin{rem}
\begin{enumerate}
 \item 
Notice that, even if $(-1)^g p_{1,\ast}
  \left(\widehat{\textnormal{ch}}(\bar{\mathcal{P}})\right)^{(g)} $ satisfies the second property for every $n$ (cf. Proposition 6 in \cite{farak}), 
  it is sufficient to ask that this property holds for one integer greater than one to uniquely characterize it.
  \item
Notice also that the condition $([n]^*-n^{2g})(\widehat{\xi})=0 $ implies 
$[n]_*(\widehat{\xi})=\widehat{\xi} $, thanks to the projection formula
$[n]_*[n]^*=n^{2g} $. In the case of an abelian scheme over the ring of integers of a number field,  K. K\"unnemann showed that there exists a decomposition of the 
Arakelov Chow groups as a direct sum of eigenspaces for the pullback $[n]^*$ (cf. \cite{kunn}). As a consequence, in this particular case the conditions $([n]^*-n^{2g})(\widehat{\xi})=0 $ and 
$[n]_*(\widehat{\xi})=\widehat{\xi}$ are equivalent, if $\widehat{\xi} $ belongs to the Arakelov Chow group.
\end{enumerate} \end{rem}

\begin{proof}
By definition  $\widehat{\xi}\in \widehat{\textnormal{CH}}^g (\mathcal{A})_\matQ (2g)$ and by Corollary \ref{cor:exact-eigenspaces} one has an injection
\[ 
\widehat{\textnormal{CH}}^g (\mathcal{A})_\matQ(2g) \hookrightarrow {\textnormal{CH}}^g (\mathcal{A})_\matQ (2g).
\]
This shows that $\widehat{\xi}$ is uniquely determined by its image in ${\textnormal{CH}}^g (\mathcal{A})_\matQ $. As this image is the same as that of $(S_0,\mathfrak{g}_{\mathcal{A}^\vee})$, this shows the theorem.
\end{proof}

Theorem \ref{theo} and Theorem \ref{uniq} give us the following axiomatic characterization of $\mathfrak{g}_{\mathcal{A}^\vee} $ and therefore of $\textnormal{pol}^0 $.
 \begin{prop}\label{char}
 The class  $\mathfrak{g}_{\mathcal{A}^\vee}$ is the unique  element $\mathfrak{g}\in  \tilde{D}^{g-1,g-1}(\mathcal{A}_\matR)$ such that
\begin{enumerate}
   \item  $\mathfrak{g}$  is a Green current for $S_0 $, 
   \item     $([n]^*-n^{2g})^k(S_0,\mathfrak{g})=0$ 
 in  $\widehat{\textnormal{CH}}^g (\mathcal{A})_\matQ$ for some $n\geq 2$ and some $k\geq1$,
   \item $[m]_*\mathfrak{g}=\mathfrak{g}$ for some $m>1$.
  \end{enumerate}
 Furthermore, 
$\textnormal{pol}^0$ is the unique element in $H^{2g-1}_{\mathcal{M}}(\mathcal{A}\setminus \mathcal{A}[N] ,g)^{(0)}$ such that
 \[
   -2 \cdot \textnormal{reg}_{\textnormal{an}}(\textnormal{pol}^0)=   (2\pi i)^{g-1}([N]^*\mathfrak{g}_{\mathcal{A}^\vee}-N^{2g}\mathfrak{g}_{\mathcal{A}^\vee})|_{\mathcal{A}\setminus \mathcal{A}[N]}
     \in \textnormal{H}^{2g-1}_{D^\textnormal{an}}((\mathcal{A}\setminus\mathcal{A}[N])_{\matR},\matR(g)).
 \]
  \end{prop}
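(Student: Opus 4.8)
The plan is to prove the two assertions separately, deducing the characterization of $\mathfrak{g}_{\mathcal{A}^\vee}$ from Theorem \ref{uniq} and the characterization of $\textnormal{pol}^0$ from Theorem \ref{theo}; in each case the comparison results already on hand supply existence, so the work is to establish the missing uniqueness.

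For the first assertion I would begin by verifying that $\mathfrak{g}_{\mathcal{A}^\vee}$ satisfies (1)--(3): property (1) is Theorem \ref{ga}(1); for (2) one uses $(S_0,\mathfrak{g}_{\mathcal{A}^\vee})=(-1)^g p_{1,\ast}(\widehat{\textnormal{ch}}(\bar{\mathcal{P}}))^{(g)}$ together with the vanishing $([n]^{*}-n^{2g})(-1)^g p_{1,\ast}(\widehat{\textnormal{ch}}(\bar{\mathcal{P}}))^{(g)}=0$ recorded in the Remark (so (2) holds with $k=1$); and (3) is Theorem \ref{ga}(3). For uniqueness, let $\mathfrak{g}$ satisfy (1)--(3) and put $\widehat{\xi}:=(S_0,\mathfrak{g})$. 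Condition (1) gives $\zeta(\widehat{\xi})=S_0$ and condition (2) is exactly the generalized eigenvector hypothesis of Theorem \ref{uniq}, so that theorem already yields
\[
(S_0,\mathfrak{g})=(S_0,\mathfrak{g}_{\mathcal{A}^\vee}) \quad\text{in } \widehat{\textnormal{CH}}^g(\mathcal{A})_\matQ.
\]
It is worth stressing that condition (3) is not used here: properties (1) and (2) already pin the class down in the arithmetic Chow group.

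The remaining, and main, step is to upgrade this equality of arithmetic cycle classes to the equality $\mathfrak{g}=\mathfrak{g}_{\mathcal{A}^\vee}$ in $\tilde D^{g-1,g-1}(\mathcal{A}_\matR)$, and this is where condition (3) enters. Since $\mathfrak{g}$ and $\mathfrak{g}_{\mathcal{A}^\vee}$ are both Green currents for $S_0$, their difference has smooth $\textnormal{dd}^c$, so by the regularity result (Theorem 1.2.2(i) of \cite{gilsou}) the class $\tilde\eta:=\widetilde{\mathfrak{g}-\mathfrak{g}_{\mathcal{A}^\vee}}$ lies in the image of $\widetilde E^{g-1,g-1}(\mathcal{A}_\matR)$. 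The equality above gives $(0,\tilde\eta)=0$ in $\widehat{\textnormal{CH}}^g(\mathcal{A})_\matQ$, so by exactness of \eqref{eq:fundamental-ex-seq} the class $\tilde\eta$ comes from $H^{2g-1}_{\mathcal{M}}(\mathcal{A},g)$; by Theorem \ref{eigen} and Corollary \ref{cor:exact-eigenspaces} it is therefore represented by a translation-invariant form, on which $[m]^{*}$ acts by $m^{2(g-1)}$, so that the projection formula $[m]_*[m]^{*}=m^{2g}$ forces $[m]_*$ to act by $m^{2}$. On the other hand, condition (3) together with Theorem \ref{ga}(3) for $\mathfrak{g}_{\mathcal{A}^\vee}$ gives $[m]_*\tilde\eta=\tilde\eta$, whence $(m^{2}-1)\tilde\eta=0$ and $\tilde\eta=0$ because $m>1$. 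I expect the delicate point to be precisely this reconciliation of the $[m]_*$-invariance from (3) with the $[n]^{*}$-eigenspace analysis: one must argue that $\tilde\eta$, a priori only known to be $[m]_*$-fixed and to vanish in the arithmetic Chow group, really lands in the translation-invariant part of $\widetilde E^{g-1,g-1}(\mathcal{A}_\matR)$ where the scalar $m^{2}\neq 1$ produces the contradiction.

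For the second assertion the identity
\[
-2\cdot\textnormal{reg}_{\textnormal{an}}(\textnormal{pol}^0)=(2\pi i)^{g-1}([N]^{*}\mathfrak{g}_{\mathcal{A}^\vee}-N^{2g}\mathfrak{g}_{\mathcal{A}^\vee})|_{\mathcal{A}\setminus\mathcal{A}[N]}
\]
is exactly Theorem \ref{theo} once one unwinds $({}_N\mathfrak{g}_{\mathcal{A}^\vee})=[N]^{*}\mathfrak{g}_{\mathcal{A}^\vee}-N^{2g}\mathfrak{g}_{\mathcal{A}^\vee}$, so only uniqueness remains. For this I would show that $\textnormal{reg}_{\textnormal{an}}$ is injective on $H^{2g-1}_{\mathcal{M}}(\mathcal{A}\setminus\mathcal{A}[N],g)^{(0)}$: by Lemma \ref{applic} the composite $\res_B\circ\phi_B\circ\textnormal{reg}_{\textnormal{an}}$ equals $\textnormal{reg}_{\textnormal{B}}\circ\textnormal{res}$, and since $\textnormal{res}$ is an isomorphism and $\textnormal{reg}_{\textnormal{B}}$ is injective, this composite is injective; hence its first factor $\textnormal{reg}_{\textnormal{an}}$ is injective on the weight-zero part. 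Consequently at most one class of $H^{2g-1}_{\mathcal{M}}(\mathcal{A}\setminus\mathcal{A}[N],g)^{(0)}$ can map to the right-hand side, and $\textnormal{pol}^0$ is that class by Theorem \ref{theo}, which completes the proof.
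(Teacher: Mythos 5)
Your proposal is correct and follows essentially the same route as the paper's own proof: Theorem \ref{uniq} pins down the class $(S_0,\mathfrak{g})$ in $\widehat{\textnormal{CH}}^g(\mathcal{A})_\matQ$, the exact sequence \eqref{eq:fundamental-ex-seq} places the difference $\mathfrak{g}-\mathfrak{g}_{\mathcal{A}^\vee}$ in the ($\matC[X]$-torsion) image of motivic cohomology, Theorem \ref{eigen} plus the projection formula force $[m]_*$ to act by $m^2$ on it, contradicting the $[m]_*$-invariance from condition (3) unless it vanishes; the second statement likewise reduces to Theorem \ref{theo} and the injectivity of $\textnormal{reg}_{\textnormal{an}}$ on the weight-zero part. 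Your only deviations are expository — you spell out the regularity step showing the difference of the two Green currents is smooth, and you rederive the injectivity of $\textnormal{reg}_{\textnormal{an}}$ from the diagram in Lemma \ref{applic} rather than citing it directly — neither of which changes the argument.
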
 
\begin{proof}
 By Theorem \ref{uniq} we know that the first two conditions of our proposition are equivalent to the first two conditions in Theorem \ref{ga}, so that  $\mathfrak{g}_{\mathcal{A}^\vee} $ satisfies the three properties above. Suppose now that $\mathfrak{g}\in  \tilde{D}^{g-1,g-1}(\mathcal{A}_\matR)$
 is another element satisfying the  three properties of our propositon and let $m>1$ be such that  $[m]_*\mathfrak{g}=\mathfrak{g}$. We want to show that 
 $\mathfrak{g}_{\mathcal{A}^\vee}=\mathfrak{g}$. Since by Theorem \ref{uniq} $(S_0, \mathfrak{g})=(-1)^g p_{1,\ast}
  \left(\widehat{\textnormal{ch}}(\bar{\mathcal{P}})\right)^{(g)}=(S_0,\mathfrak{g}_{\mathcal{A}^\vee})$, the exact sequence (\ref{eq:fundamental-ex-seq})
  implies that the difference
  $\mathfrak{g}_{\mathcal{A}^\vee}-\mathfrak{g}$ belongs to the image of the regulator $ H^{2g-1}_{\mathcal{M}}(\mathcal{A},g) \to \widetilde{E}^{g-1,g-1}(\mathcal{A}_{\matR})$.
  Since $H^{2g-1}_{\mathcal{M}}(\mathcal{A},g) $ is a torsion module over $\matC[X]$ (with $X$ acting as $[m]^*$), then $\mathfrak{g}_{\mathcal{A}^\vee}-\mathfrak{g}$ lies in
  $T \widetilde{E}^{g-1,g-1}(\mathcal{A}_{\matR})$. The projection formula and Theorem \ref{eigen} give
  \[
   m^{2g}(\mathfrak{g}_{\mathcal{A}^\vee}-\mathfrak{g})=[m]_*[m]^*(\mathfrak{g}_{\mathcal{A}^\vee}-\mathfrak{g})=m^{2g-2}[m]_*(\mathfrak{g}_{\mathcal{A}^\vee}-\mathfrak{g})
  \]
  i.e. $[m]_*(\mathfrak{g}_{\mathcal{A}^\vee}-\mathfrak{g})=m^2(\mathfrak{g}_{\mathcal{A}^\vee}-\mathfrak{g})$, but property three in our proposition implies that
  $[m]_*(\mathfrak{g}_{\mathcal{A}^\vee}-\mathfrak{g})=(\mathfrak{g}_{\mathcal{A}^\vee}-\mathfrak{g}) $. This is possible only if $\mathfrak{g}_{\mathcal{A}^\vee}-\mathfrak{g} $ is zero.

The second statement is a simple consequence of Theorem \ref{theo} and the fact that $ \textnormal{reg}_{\textnormal{an}} $ is injective when restricted to $ H^{2g-1}_{\mathcal{M}}(\mathcal{A}\setminus\mathcal{A}[N],g)^{(0)} $ (cf. Lemma 4.2.6
 in \cite{kingsross}).
\end{proof}

\bibliography{biblioWorkwithKings}{}
\bibliographystyle{alpha}
  \end{document}